\newtheorem{theorem}{Theorem}[section]
\newtheorem{lemma}[theorem]{Lemma}
\newtheorem{remark}[theorem]{Remark}
\numberwithin{equation}{section}
\def\p{\partial}  \def\ora{\overrightarrow}
		\def\ol{\overline}		\def\m{\mathbb}		
\def\O{\Omega}  \def\lam{\lambda}  \def\eps{\epsilon}  
	\def\wt{\widetilde}
\def\be{\begin{equation}}     \def\ee{\end{equation}}
\def\bes{\begin{equation*}}		\def\ees{\end{equation*}}
\title{Lower bounds for the blow-up time of the heat equation in convex domains with local nonlinear boundary conditions}
\author[a]{Xin Yang\thanks{Email: yang2x2@ucmail.uc.edu}}
\author[b]{Zhengfang Zhou\thanks{Email: zfzhou@math.msu.edu}}
\affil[a]{Department of Mathematical Sciences, University of Cincinati, Cincinnati, OH 45220, USA}
\affil[b]{Department of Mathematics, Michigan State University, East Lansing, MI 48824, USA}
\date{}
\begin{document}
\maketitle

\begin{abstract}
This paper studies the lower bound for the blow-up time $T^{*}$ of the heat equation $u_t=\Delta u$ in a bounded convex domain $\Omega$ in $\mathbb{R}^{N}(N\geq 2)$ with positive initial data $u_{0}$ and a local nonlinear Neumann boundary condition: the normal derivative $\partial u/\partial n=u^{q}$ on partial boundary $\Gamma_1\subseteq\partial\Omega$ for some $q>1$, while $\partial u/\partial n=0$ on the other part. For any $\alpha<\frac{1}{N-1}$, we obtain a lower bound for $T^{*}$ which is of order $|\Gamma_{1}|^{-\alpha}$ as $|\Gamma_{1}|\rightarrow 0^{+}$, where $|\Gamma_{1}|$ represents the surface area of $\Gamma_{1}$. As $|\Gamma_{1}|\rightarrow 0^{+}$, this result significantly improves the previous lower bound $\ln\big(|\Gamma_1|^{-1}\big)$ and is almost optimal in dimension $N=2$, since the existing upper bound is of order $|\Gamma_{1}|^{-1}$ as $|\Gamma_{1}|\rightarrow 0^{+}$. In addition, the optimal asymptotic order of the lower bound for $T^{*}$ on $q$ (as $q\rightarrow 1^{+}$) and on $M_{0}$ (as $M_{0}\rightarrow 0^{+}$) are obtained, where $M_{0}$ denotes the maximum of $u_{0}$. 
\end{abstract}

%

\bigskip
\bigskip

\section{Introduction}  
\label{Sec, introduction}
\subsection{Problem and Results}
In this paper, $\Omega$ represents a bounded open subset in $\mathbb{R}^{N}$ ($N\geq 2$) with $C^{2}$ boundary $\partial\Omega$; $\Gamma_1$ and $\Gamma_2$ denote two disjoint relatively open subsets of $\p\O$ which satisfy $\Gamma_{1}\neq \emptyset$ and $\ol{\Gamma}_1\cup\ol{\Gamma}_2=\p\O$. Moreover, $\widetilde{\Gamma}\triangleq \ol{\Gamma}_1\cap\ol{\Gamma}_2$ is assumed to be $C^{1}$ when being regarded as $\p\Gamma_1$ or $\p\Gamma_2$. We study the following problem:
\be\label{Prob}
\left\{\begin{array}{lll}
u_{t}(x,t)=\Delta u(x,t) &\text{in}& \Omega\times (0,T],\\
\frac{\partial u}{\partial n}(x,t)=u^{q}(x,t) &\text{on}& \Gamma_1\times (0,T],\\
\frac{\partial u}{\partial n}(x,t)=0 &\text{on}& \Gamma_2\times (0,T],\\
u(x,0)=u_0(x) &\text{in}& \Omega,
\end{array}\right.\ee
where 
\be\label{assumption on prob}
q>1,\, u_0\in C^{1}(\ol{\O}),\, u_0(x)\geq 0,\, u_0(x)\not\equiv 0. \ee
The normal derivative on the boundary is understood in the classical way: for any $(x,t)\in\p\O\times(0,T]$,
\be\label{Normal deri def, classical}
\frac{\p u}{\p n}(x,t)\triangleq \lim_{h\rightarrow 0^{+}} \frac{u(x,t)-u(x-h\ora{n}(x),t)}{h},\ee
where $\ora{n}(x)$ denotes the exterior unit normal vector at $x$. $\p\O$ being $C^2$ ensures that $x-h\ora{n}(x)$ belongs to $\O$ when $h$ is positive and sufficiently small. 

Throughout this paper, we write
\be\label{initial max}
M_0= \max_{x\in\ol{\O}}u_0(x)\ee
and denote $M(t)$ to be the supremum of the solution $u$ to (\ref{Prob}) on $\ol{\O}\times[0,t]$:
\be\label{max function at time t}
M(t)=\sup_{(x,\tau)\in \ol{\O}\times[0,t]}u(x,\tau).\ee
$|\Gamma_{1}|$ represents the surface area of $\Gamma_{1}$, that is
\[|\Gamma_{1}|=\int_{\Gamma_{1}}\,dS(x),\]
where $dS(x)$ means the surface integral with respect to the variable $x$. $\Phi$ refers to the fundamental solution to the heat equation:
\be\label{fund soln of heat eq}
\Phi(x,t)=\frac{1}{(4\pi t)^{N/2}}\,\exp\Big(-\frac{|x|^2}{4t}\Big), \quad\forall\, (x,t)\in\m{R}^{N}\times(0,\infty).\ee
In addition, the constants $C=C(a,b\dots)$ and $C_{i}=C_{i}(a,b\dots)$ will always be positive and finite and depend only on the parameters $a,b\dots$. One should also note that $C$ and $C_{i}$ may stand for different constants in different places.

The recent paper \cite{YZ16} studied (\ref{Prob}) systematically and the motivation was the Space Shuttle Columbia disaster in 2003, we refer the reader to that paper for the detailed discussion of the background. As a summary of its conclusions, \cite{YZ16} first established the local existence and uniqueness theory for (\ref{Prob}) in the following sense: there exists $T>0$ such that there is a unique function $u$ in $C^{2,1}\big(\O\times(0,T]\big)\bigcap C\big(\overline{\O}\times[0,T]\big)$ which satisfies (\ref{Prob}) pointwisely and also satisfies 
\be\label{interface bdry deri}
\frac{\p u}{\p n}(x,t)=\frac{1}{2}\,u^{q}(x,t), \quad \forall\, (x,t)\in\wt{\Gamma}\times (0,T]. \ee
Moreover, it is shown that this unique solution $u$ becomes strictly positive as soon as $t>0$. We want to remark here that the solution constructed in \cite{YZ16} through the heat potential technique automatically satisfies (\ref{interface bdry deri}) due to a generalized jump relation. The purpose of imposing this additional restriction (\ref{interface bdry deri}) to the local solution is to ensure the uniqueness through the Hopf's lemma, it is not clear whether the uniqueness will still hold without this restriction. After the local existence and uniqueness theory was set up, it also studied the blow-up phenomenon of (\ref{Prob}). If $T^{*}$ denotes the maximal existence time of the local solution $u$, then it is proved that $0<T^{*}<\infty$ and $\lim\limits_{t\nearrow T^{*}}M(t)=\infty$. In other words, the maximal existence time $T^{*}$ is just the blow-up time of $u$. Moreover, if $\min\limits_{x\in\ol{\O}}u_{0}(x)>0$, then an explicit formula for an upper bound of $T^{*}$ is obtained as below.
\be\label{upper bdd}
T^{*}\leq \frac{1}{(q-1)|\Gamma_1|}\int_{\O}u_0^{1-q}(x)\,dx. \ee
On the other hand, a lower bound of $T^{*}$ is also provided:
\be\label{lower bdd, small broken part}
T^{*}\geq C^{-\frac{2}{N+2}}\bigg[\ln\Big(|\Gamma_1|^{-1}\Big)-(N+2)(q-1)\ln M_0-\ln(q-1)-\ln C\bigg]^{\frac{2}{N+2}}, \ee
where $C=C(N,\O,q)$ is some positive constant. 

In some realistic problems, small $|\Gamma_{1}|$ is of interest. For example in \cite{YZ16}, the motivation for the study of (\ref{Prob}) is the Columbia space shuttle disaster and $\Gamma_{1}$ stands for the broken part on the left wing of the shuttle during launching, so the surface area $|\Gamma_{1}|$ is expected to be small. As $|\Gamma_{1}|\rightarrow 0^{+}$, the upper bound (\ref{upper bdd}) is of order $|\Gamma_{1}|^{-1}$ while the lower bound (\ref{lower bdd, small broken part}) is only of order $\big[\ln\big(|\Gamma_1|^{-1}\big)\big]^{2/(N+2)}$, so there is a big gap between them. The natural question to ask is whether $T^{*}$ grows like a positive power of $|\Gamma_{1}|^{-1}$ or just like the logarithm of $|\Gamma_{1}|^{-1}$ as $|\Gamma_{1}|\rightarrow 0^{+}$? 

Under the convexity assumption of the domain $\O$, this paper provides a lower bound for $T^{*}$ which grows like $|\Gamma_{1}|^{-\alpha}$ for any $\alpha<\frac{1}{N-1}$. In the meantime, the asymptotic behaviour of $T^{*}$ with respect to $q$ (as $q\rightarrow 1^{+}$) and $M_{0}$ (as $M_{0}\rightarrow 0^{+}$ or $M_{0}\rightarrow +\infty$) are also studied. The following is the main result of this paper.
\begin{theorem}\label{Thm, new lower bound of blow-up time}
Assume (\ref{assumption on prob}). Let $\O$ be convex. Then for any $\alpha\in\big[0,\frac{1}{N-1}\big)$, there exists $C=C(N,\O, \alpha)$ such that 
\be\label{new lower bound}
T^{*} \geq \frac{C}{(q-1)M_{0}^{q-1}\,|\Gamma_{1}|^{\alpha}}\,\bigg(\min\bigg\{1,\frac{1}{q M_{0}^{q-1}|\Gamma_{1}|^{\alpha}}\bigg\}\bigg)^{\frac{1+(N-1)\alpha}{1-(N-1)\alpha}}, \ee
where $T^{*}$ is the maximal existence time for (\ref{Prob}) and $M_{0}$ is given by (\ref{initial max}). 
In particular, if $\alpha$ is chosen to be 0 in (\ref{new lower bound}), then 
\be\label{lower bdd for whole bdry}
T^{*}\geq \frac{C_{1}}{(q-1)M_{0}^{q-1}}\min\bigg\{1,\frac{1}{q M_{0}^{q-1}}\bigg\},\ee
for some $C_{1}=C_{1}(N,\O)$.
\end{theorem}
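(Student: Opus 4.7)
The plan is to combine a Duhamel-type representation of $u$ with a H\"older interpolation on $\Gamma_{1}$ to arrive at an integral inequality of the form
\[ M(t) \;\le\; M_{0} \;+\; C_{1}\,M(t)^{q}\,|\Gamma_{1}|^{\alpha}\,t^{\beta}, \qquad \beta := \tfrac{1-(N-1)\alpha}{2}, \qquad (\star)\]
and then to extract the desired lower bound on $T^{*}$ from $(\star)$ by combining a fixed-point threshold analysis (which produces the $(qX)^{-\gamma}$-weighted branch of the minimum, with $\gamma=1/\beta-1$ and $X:=M_{0}^{q-1}|\Gamma_{1}|^{\alpha}$) with an iterative doubling bootstrap (which supplies the $1/(q-1)$ prefactor). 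The convexity of $\O$ is used exactly once, to dominate the Neumann Green's function by the free heat kernel $\Phi$.

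First I would establish $(\star)$. Green's identity together with the fact that $(x-y)\cdot\ora{n}(y)\le 0$ for every $x\in\O$ and $y\in\p\O$ when $\O$ is convex (so that the double-layer contribution has the favourable sign) gives the pointwise upper bound
\[ u(x,t) \;\le\; \int_{\O}\Phi(x-y,t)\,u_{0}(y)\,dy + 2\int_{0}^{t}\!\!\int_{\Gamma_{1}}\Phi(x-y,t-s)\,u^{q}(y,s)\,dS(y)\,ds.\]
Taking the supremum in $x\in\overline{\O}$, using $\int_{\O}\Phi\,dy\le 1$ and $u^{q}\le M(t)^{q}$, yields $M(t)\le M_{0}+2\,M(t)^{q}\,K(t)$ with $K(t):=\sup_{x}\int_{0}^{t}\int_{\Gamma_{1}}\Phi(x-y,\tau)\,dS(y)\,d\tau$. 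Two a priori bounds on $\Psi(x,\tau):=\int_{\Gamma_{1}}\Phi(x-y,\tau)\,dS(y)$ are available: $\Psi\le|\Gamma_{1}|(4\pi\tau)^{-N/2}$ (from the pointwise bound on $\Phi$) and $\Psi\le C\tau^{-1/2}$ (by flattening $\Gamma_{1}$ locally and integrating a Gaussian in the $N-1$ tangential variables). Interpolating $\Psi=\Psi^{1-\alpha}\Psi^{\alpha}$ between these two produces
\[ \Psi(x,\tau)\;\le\;C\,|\Gamma_{1}|^{\alpha}\,\tau^{-(1+(N-1)\alpha)/2},\]
and the constraint $\alpha<1/(N-1)$ is exactly what makes $\int_{0}^{t}\tau^{-(1+(N-1)\alpha)/2}\,d\tau$ converge at $\tau=0$; integrating in $\tau$ gives $K(t)\le C'|\Gamma_{1}|^{\alpha}t^{\beta}$, which yields $(\star)$.

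Second, I would extract the lower bound on $T^{*}$ from $(\star)$ by two complementary arguments. For the fixed-point bound, set $\varphi_{t}(M):=M-C_{1}M^{q}|\Gamma_{1}|^{\alpha}t^{\beta}$; $(\star)$ forces $\varphi_{t}(M(t))\le M_{0}$. Since $\varphi_{t}$ attains its maximum $\tfrac{q-1}{q}(qC_{1}|\Gamma_{1}|^{\alpha}t^{\beta})^{-1/(q-1)}$ at $M_{*}=(qC_{1}|\Gamma_{1}|^{\alpha}t^{\beta})^{-1/(q-1)}$, requiring this maximum to exceed $M_{0}$ gives the first lower bound $T^{*}\ge[(q-1)^{q-1}/q^{q}]^{1/\beta}(C_{1}X)^{-1/\beta}$, which reproduces the $(qX)^{-\gamma}$-branch of the theorem's minimum in the large-$q$ regime. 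For the doubling bootstrap, I would restart $(\star)$ at an arbitrary $t_{0}\in[0,T^{*})$ with $u(\cdot,t_{0})$ as the new initial datum, which yields $M(t)\le M(t_{0})+C_{1}M(t)^{q}|\Gamma_{1}|^{\alpha}(t-t_{0})^{\beta}$ for $t\ge t_{0}$. Setting $M_{i}:=2^{i}M_{0}$ and $t_{i}:=\inf\{t:M(t)\ge M_{i}\}$, forcing $M$ to double across $[t_{i},t_{i+1}]$ shows that $\Delta t_{i}\ge(2^{-q-i(q-1)}/(C_{1}X))^{1/\beta}$, and summing the geometric series gives
\[ T^{*}\;\ge\;\sum_{i\ge 0}\Delta t_{i}\;\ge\;\frac{c\,X^{-1/\beta}}{2^{(q-1)/\beta}-1}\;\ge\;\frac{c''\,X^{-1/\beta}}{q-1},\]
the last step being the elementary inequality $2^{a}-1\le a\,2^{a}\log 2$ applied with $a=(q-1)/\beta$. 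Taking the larger of the fixed-point and doubling bounds, and noting that for $X\le 1$ one has $X^{-1/\beta}\ge X^{-1}$ since $1/\beta\ge 1$, reproduces the full $\min$-expression of the theorem after relabeling the constants.

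The main obstacle is the final bookkeeping: keeping the constants $q$-independent and checking that the maximum of the two bounds matches, term by term, the closed-form $\min$-expression of the theorem through the transition $qX=1$. Steps~1 and~2 are essentially standard once the convexity-based domination of $G_{N}$ by $2\Phi$ is set up, so the whole scheme is driven by the H\"older interpolation of Step~2 (which identifies the critical exponent $1/(N-1)$) and by the geometric summation of Step~3 (which produces the crucial $1/(q-1)$ factor from the small-$a$ asymptotics of $2^{a}-1$).
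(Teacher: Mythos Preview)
Your derivation of $(\star)$ contains a sign error that breaks the argument. For $x\in\overline{\Omega}$ and $y\in\partial\Omega$ in a convex domain one indeed has $(x-y)\cdot\ora{n}(y)\le 0$, but this makes
\[
D_{y}\big[\Phi(x-y,t-\tau)\big]\cdot\ora{n}(y)=\Phi(x-y,t-\tau)\,\frac{(x-y)\cdot\ora{n}(y)}{2(t-\tau)}\;\le\;0,
\]
so the double-layer term $-\int_{0}^{t}\int_{\partial\Omega}D_{y}\Phi\cdot\ora{n}\,u\,dS\,d\tau$ in the representation formula is \emph{nonnegative} (since $u\ge 0$). Dropping it yields a \emph{lower} bound for $u(x,t)$, not an upper bound; your claimed pointwise inequality is therefore false, and with it the route to $(\star)$. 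The paper does not discard this term. Instead, it bounds the double-layer contribution from above by $M_{k}\int_{0}^{t_{k}}\int_{\partial\Omega}|D_{y}\Phi\cdot\ora{n}|\,dS\,d\tau$ and then invokes the exact identity (this is where convexity is really used)
\[
\int_{0}^{t}\int_{\partial\Omega}\big|D_{y}\big[\Phi(x-y,t-\tau)\big]\cdot\ora{n}(y)\big|\,dS\,d\tau=\frac{1}{2}-\int_{\Omega}\Phi(x-y,t)\,dy, \qquad x\in\partial\Omega,
\]
which, after substitution, cancels $M_{k}$ on both sides and leaves $(M_{k}-M_{k-1})\int_{\Omega}\Phi\le M_{k}^{q}\int_{0}^{t_{k}}\int_{\Gamma_{1}}\Phi\,dS\,d\tau$. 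This is morally your $(\star)$, but note that to extract a uniform constant one needs the lower bound $\int_{\Omega}\Phi(x-y,t)\,dy\ge b_{1}>0$, which is only available for $t\le 1$; thus $(\star)$ is not valid on arbitrarily long intervals, and your unrestricted bootstrap cannot be run as written.

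Even granting a restarted $(\star)$ on unit intervals, your doubling scheme with fixed ratio $\lambda=2$ does not reproduce the theorem's constant: the step estimate $\Delta t_{i}\ge\big(2^{-q-i(q-1)}/(C_{1}X)\big)^{1/\beta}$ leaves a factor $2^{-q/\beta}$ after summation, which decays exponentially in $q$ and cannot be absorbed into a constant $C=C(N,\Omega,\alpha)$. The paper circumvents this by choosing the ratios \emph{adaptively}: for a fixed target step $t_{*}\in(0,1]$ one picks $\lambda_{k}\in\big(1,\frac{q}{q-1}\big]$ solving $(\lambda_{k}-1)/\lambda_{k}^{q}=M_{k-1}^{q-1}\delta_{1}$ with $\delta_{1}\sim|\Gamma_{1}|^{\alpha}t_{*}^{\beta}$, proves via a counting lemma that this runs for at least $L>\frac{1}{10(q-1)}\big(\frac{1}{M_{0}^{q-1}\delta_{1}}-9q\big)$ steps, and then maximizes $Lt_{*}$ over $t_{*}\in(0,1]$. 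It is this adaptive choice (with $\lambda_{k}\to 1$ as $M_{k-1}$ grows) and the final optimization, not a geometric doubling, that produces both the $1/(q-1)$ prefactor and the $q$-independent constant in the stated bound.
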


\begin{remark}\label{Remark, asymp behavior}
Theorem \ref{Thm, new lower bound of blow-up time} can be used to study the asymptotic behaviour of $T^{*}$ with respect to $|\Gamma_{1}|$, $q$ and $M_{0}$. See the discussions below.

\begin{itemize}
\item[(1)] Relation between $T^{*}$ and $|\Gamma_{1}|$: If $|\Gamma_{1}|\rightarrow 0^{+}$ and other factors are fixed, then it follows from (\ref{new lower bound}) that for any $\alpha\in\big[0,\frac{1}{N-1}\big)$, 
\[T^{*}\geq \frac{C}{(q-1)M_{0}^{q-1}|\Gamma_{1}|^{\alpha}}\sim |\Gamma_{1}|^{-\alpha}.\]

This lower bound improves the previous one $\big[\ln\big(|\Gamma_1|^{-1}\big)\big]^{2/(N+2)}$ significantly as $|\Gamma_{1}|\rightarrow 0^{+}$. In particular, when the dimension $N=2$, this result is almost optimal since $\alpha$ can be arbitrarily close to 1 and the upper bound in (\ref{upper bdd}) is of order $|\Gamma_1|^{-1}$ as $|\Gamma_{1}|\rightarrow 0^{+}$.

\item[(2)] Relation between $T^{*}$ and $q$: If $q\rightarrow 1^{+}$ and other factors are fixed, then (\ref{lower bdd for whole bdry}) implies 
\[T^{*}\geq \frac{C_{1}}{q-1}.\]
On the other hand, 
the upper bound (\ref{upper bdd}) implies 
$$T^{*}\leq \frac{C_{2}}{q-1}.$$ 
Thus the order of $T^{*}$ as $q\rightarrow 1^{+}$ is exactly $(q-1)^{-1}$.

\item[(3)] Relation between $T^{*}$ and $M_{0}$:
\begin{itemize}
\item[$\bullet$] If $M_{0}\rightarrow 0^{+}$ and other factors are fixed, then (\ref{lower bdd for whole bdry}) implies 
$$T^{*}\geq C_{1}\,M_{0}^{-(q-1)}.$$
This order is optimal since if the initial data $u_{0}$ is kept to be constant, then it follows from (\ref{upper bdd}) that 
\[T^{*}\leq C_{2}\,M_{0}^{-(q-1)}.\]

\item[$\bullet$] If $M_{0}\rightarrow +\infty$ and other factors are fixed, then (\ref{lower bdd for whole bdry}) implies 
$$T^{*}\geq C_{1}\,M_{0}^{-2(q-1)}.$$
In this case, there is gap from the upper bound in (\ref{upper bdd}).
\end{itemize} 
\end{itemize}
\end{remark}

\subsection{Historical Works}
\subsubsection{Blow-up phenomenon for the heat equation with nonlinear Neumann conditions}
Starting from the pioneering papers by Kaplan \cite{Kap63} and Fujita\cite{Fuj66}, the blow-up phenomenon of parabolic type has been extensive studied in the literature for the Cauchy problem as well as the boundary value problems. We refer the readers to the surveys \cite{DL00, Lev90}, the books \cite{Hu11, QS07} and the references therein. 



One of the core objects in the area is the heat equation with Neumann boundary conditions in a bounded domain $\O$:
\be\label{heat with Neumann}
\left\{\begin{array}{lll}
u_{t}(x,t)=\Delta u(x,t) &\text{in}& \Omega\times (0,T],\\
\frac{\partial u}{\partial n}(x,t)=F\big(u(x,t)\big) &\text{on}& \p\O\times (0,T],\\
u(x,0)=\psi(x) &\text{in}& \Omega.
\end{array}\right.\ee
Here the initial data $\psi$ is not assumed to be nonnegative. It is well-known that there are two ways to construct the classical solution to (\ref{heat with Neumann}) depending on the smoothness of $\p\O$, $F$ and $u_{0}$ (see Theorem 1.1 and 1.3 in \cite{L-GMW91}, also see the books \cite{Fri64, Lie96, LSU68}).
\begin{itemize}
\item[(a)] The first way is by Schauder estimate. Assume $\p\O$ is $C^{2+\alpha}$, $F\in C^{1+\alpha}(\m{R})$, $\psi\in C^{2+\alpha}(\ol{\O})$ and the compatibility condition
\[\frac{\p \psi}{\p n}(x)=F\big(\psi(x)\big), \quad \forall\, x\in \p\O.\]

Then there exists $T>0$ and a unique function $u$ in $C^{2+\alpha,1+\frac{\alpha}{2}}\big(\ol{\O}\times[0,T]\big)$ which satisfies (\ref{heat with Neumann}) pointwisely.

\item[(b)] The second way is by the heat potential technique. The requirements on the data can be relaxed and in particular, the compatibility condition is no longer needed, but accordingly the conclusion is also weaker. Assume $\p\O$ is $C^{1+\alpha}$, $F\in C^{1}(\m{R})$, $\psi\in C^{1}(\ol{\O})$. Then there exists $T>0$ and a unique function $u$ in $C^{2,1}\big(\O\times(0,T]\big)\bigcap C\big(\overline{\O}\times[0,T]\big)$ which satisfies (\ref{heat with Neumann}) pointwisely.
\end{itemize}
In most papers, the assumptions will fall into either case (a) or case (b) for which the meaning of the finite-time blowup is clear. For other cases, the finite-time blowup means by assuming the local existence and uniqueness of a classical solution, then such a local solution can not be extended to a global one. In the following statements, we will ignore these distinctions and just refer them to be the local (classical) solutions. It has been already known that if $F$ is bounded on $\m{R}$, then the local solutions can be extended globally. But if $F$ is unbounded, then the finite-time blowup may occur. 

The first result on the blow-up phenomenon for (\ref{heat with Neumann}) is due to Levine and Payne \cite{LP74}. They used a concavity argument to conclude that any classical solution blows up in finite time under the two assumptions as below.
\begin{itemize}
\item First, 
\be\label{LP radiation fn}
F(z)=|z|^{q}h(z),\ee
for some constant $q>1$ and some differentiable, increasing function $h(z)$.

\item Secondly,  
\be\label{LP cond}
\frac{1}{|\p\O|}\,\int_{\p\O}\bigg(\int_{0}^{\psi(x)}F(z)\,dz\bigg)\,dS(x)>\frac{1}{2}\int_{\O}|D\psi(x)|^{2}\,dx. \ee
\end{itemize}
\begin{remark}\label{Remark, cor of LP74} As a corollary of the result in \cite{LP74}, if $h(z)$ is positive and $\psi$ is a positive constant function, then the solution blows up in finite time. Combining with the maximum principle, this also implies that for any positive $h(z)$ and $\psi$, the solution blows up in finite time. \end{remark} 
Later, Walter \cite{Wal75} gave a more complete characterization for the blow-up phenomenon by introducing some comparison functions. More precisely, if $F(z)$ is positive, increasing and convex for $z\geq z_{0}$ with some constant $z_{0}$, there are exactly two possibilities.
\begin{itemize}
\item First, if $\int_{z_{0}}^{\infty}\frac{1}{F(z)F'(z)}\,dz=\infty$, then the solution exists globally for any initial data $\psi$.

\item Secondly, if $\int_{z_{0}}^{\infty}\frac{1}{F(z)F'(z)}\,dz<\infty$, then the solution blows up in finite time for large initial data $\psi$.
\end{itemize}
The result was further generalized by Rial and Rossi \cite{RR97} (also see \cite{L-GMW91}). In \cite{RR97}, by assuming $F$ to be $C^{2}$, increasing and positive in $\m{R}_{+}$, and also assuming $1/F$ to be locally integrable near $\infty$ (that is $\int^{\infty}\frac{1}{F(z)}\,dz<\infty$), it is shown that for any positive initial data $\psi>0$, the classical solution blows up in finite time. The success of their method was due to a clever choice of an energy function which made the proof short and elementary.

Applying these earlier results to the simpler model (that is (\ref{Prob}) with $\Gamma_{2}=\emptyset$)
\be\label{heat with Neumann, power}
\left\{\begin{array}{lll}
u_{t}(x,t)=\Delta u(x,t) &\text{in}& \Omega\times (0,T],\\
\frac{\partial u}{\partial n}(x,t)=u^{q}(x,t) &\text{on}& \p\O\times (0,T],\\
u(x,0)=u_{0}(x) &\text{in}& \Omega,
\end{array}\right.\ee
where $q>1$ and the initial data $u_{0}\geq 0$ and $u_{0}\not\equiv 0$, it can be shown that any solution to (\ref{heat with Neumann, power}) blows up in finite time. In fact, by the maximum principle, the solution $u$ becomes positive as soon as $t>0$. Then either Remark \ref{Remark, cor of LP74} or the result in \cite{RR97} (also see \cite{HY94}) implies the finite time blowup of the solution. However, when the nonlinear radiation condition is only imposed on partial boundary (that is when $\Gamma_{2}\neq\emptyset$ in (\ref{Prob})), additional difficulties appear due to the discontinuity of the normal derivative along the interface $\wt{\Gamma}$ between $\Gamma_{1}$ and $\Gamma_{2}$. To our knowledge, \cite{YZ16} was the first paper that dealt with this problem and gave the bounds of the blow-up time as in (\ref{upper bdd}) and (\ref{lower bdd, small broken part}).

\subsubsection{Estimate for the lower bound of the blow-up time}
When considering the bounds of the blow-up time, the upper bound is usually related to the nonexistence of the global solutions and various methods on this issue have been developed (see \cite{Lev75} for a list of six methods). The lower bound was not studied as much in the past and not many methods have been explored. However, the lower bound can be argued to be more useful in practice, since it provides an estimate of the safe time. In the existing literature, they either used a comparison argument or applied differential inequality techniques. 

The first work on the lower bound estimate of the blow-up time was due to Kaplan \cite{Kap63}. Later, Payne and Schaefer  developed a very robust method on this issue. For example, they derived the lower bound of the blow-up time for the nonlinear heat equation with homogeneous Dirichlet or Neumann boundary conditions in \cite{PS06, PS07}. Later this idea was also applied to the problem (\ref{heat with Neumann}) (see \cite{PS09}) and many other types of problems (see e.g. \cite{Ena11, PPV-P10, PP13, BS14, LL12, TV-P16, AD17, DS16}). Recently, in order to obtain the lower bound of blow-up time for the problem (\ref{Prob}), the authors of this paper developed another method in \cite{YZ16} by analyzing the representation formula of the solution. Nonetheless, this method still relied on the differential inequality argument. In the next section, we will compare this current paper with \cite{PS09} and \cite{YZ16} in more detail.

\subsection{Comparison}
In this section, we will compare the methods and results in \cite{PS09} and \cite{YZ16} with those in the current paper concerning the model problem (\ref{Prob}).

\subsubsection{Methods}
In \cite{PS09}, due to technical reasons, it required $\O$ to be convex in $\m{R}^{3}$, $\Gamma_{1}=\p\O$ and $q\geq 3/2$.  By introducing the energy function
\bes \varphi(t)=\int_{\O}u^{4(q-1)}(x,t)\,dx\ees
and adopting a Sobolev-type inequality developed in \cite{PS06}, they derived a first order differential inequality for $\varphi(t)$ and then obtain a lower bound for $T^{*}$:
\be\label{PS result}T^{*}\geq C\int_{\varphi(0)}^{\infty}\frac{d\eta}{\eta+\eta^{3}},\ee
where $C$ is some positive constant depending only on $\O$ and $q$. 

\cite{YZ16} studied the problem (\ref{Prob}) without any further assumptions. The authors considered the function $M(t)$ defined in (\ref{max function at time t}). By analysing the representation formula (\ref{rep formula}) for the solution, they derive a nonlinear Gronwall-type inequality for $M^{N+2}(t)$ and then obtain the lower bound (\ref{lower bdd, small broken part}).

For this paper, also due to the technical reasons, the convexity of $\O$ is imposed, but this is the only additional assumption. The novelty of this paper is that we develop a new method which does not introduce any functionals or differential inequalities. Instead, we will chop the maximal existence time interval $[0,T^{*})$ into small subintervals in a delicate way such that the length of each subinterval is at least $t_{*}>0$ and the total number of the subintervals has a lower bound $L_{0}$. As a result, $L_{0} t_{*}$ is a lower bound for $T^{*}$ as stated in Theorem \ref{Thm, new lower bound of blow-up time}. 

\subsubsection{Results}
Finally we want to compare Theorem \ref{Thm, new lower bound of blow-up time} with the previous lower bound estimates (\ref{lower bdd, small broken part}) in \cite{YZ16} and (\ref{PS result}) in \cite{PS09}. For convenience of statement, the lower bounds in (\ref{lower bdd, small broken part}), (\ref{PS result}), (\ref{new lower bound}) and (\ref{lower bdd for whole bdry}) will be written as $T_{1}$, $T_{2}$, $T_{1*}$ and $T_{2*}$ respectively. 

\begin{itemize}
\item Comparing Theorem \ref{Thm, new lower bound of blow-up time} with (\ref{lower bdd, small broken part}): 
\begin{itemize}
\item[$\diamond$] As $|\Gamma_{1}|\rightarrow 0^{+}$, this has been discussed before: $T_{1*}$ improves $T_{1}$ significantly.

\item[$\diamond$] As $q\rightarrow 1^{+}$, $T_{2*}$ is of order $(q-1)^{-1}$; however, $T_{1}$ is only of a logarithm order $\big(\ln[(q-1)^{-1}]\big)^{2/(N+2)}$.

\item[$\diamond$] As $M_{0}\rightarrow 0^{+}$, $T_{2*}$ is of order $M_{0}^{-(q-1)}$; on the other hand, $T_{1}$ is only of a logarithm order $\big(\ln[M_{0}^{-1}]\big)^{2/(N+2)}$.

\item[$\diamond$] As $M_{0}\rightarrow \infty$, $T_{2*}$ is of order $M_{0}^{-2(q-1)}$; but $T_{1}$ is not applicable since it is negative when $M_{0}$ is large.
\end{itemize}

\item Comparing Theorem \ref{Thm, new lower bound of blow-up time} with (\ref{PS result}). 
\begin{itemize}
\item[$\diamond$] On the one hand, $T_{2*}$ is valid for any $q>1$ and also give the exact asymptotic rate of $T^{*}$ as $q\rightarrow 1^{+}$. On the other hand, \cite{PS09} requires $q\geq 3/2$ due to the technical restriction.

\item[$\diamond$] $T_{2*}$ holds for any nonempty partial boundary $\Gamma_{1}$, but \cite{PS09} only considers the whole boundary case $\Gamma_{1}=\p\O$.

\item[$\diamond$] As $M_{0}\rightarrow 0^{+}$, $T_{2*}$ is of order $M_{0}^{-(q-1)}$; if the initial data $u_{0}$ does not oscillate too much, that is assuming
\[\varphi(0)=\int_{\O}u_{0}^{4(q-1)}(x)\,dx\sim M_{0}^{4(q-1)}\,|\O|,\]
then \[T_{2}\sim\ln\Big(\frac{1}{\varphi(0)}\Big)\sim 4(q-1)\ln(M_{0}^{-1}),\]
which is only a logarithm order $\ln(M_{0}^{-1})$.

\item[$\diamond$] As $M_{0}\rightarrow \infty$, $T_{2*}$ is of order $M_{0}^{-2(q-1)}$; again, if the initial data $u_{0}$ again does not oscillate too much, then 
\[T_{2}\sim [\varphi(0)]^{-2}\sim M_{0}^{-8(q-1)}.\]
Since $M_{0}$ is large, 
\[M_{0}^{-2(q-1)}>>M_{0}^{-8(q-1)}.\]
\end{itemize}

\end{itemize}

\subsection{Future Works}
\begin{itemize}
\item In practice, the domain $\O$ may not be convex everywhere. Only local convexity is reasonable. In the subsequent work \cite{YZ1707}, we follow the strategy in this paper and obtain similar results by only assuming local convexity near $\Gamma_{1}$. But this method fails when the region near $\Gamma_{1}$ is concave. So how to deal with this case is an open problem.

\item Even in the convex domain case, there is still gap between the lower bound $|\Gamma_{1}|^{-1/(N-1)}$ and the upper bound $|\Gamma_1|^{-1}$ as $|\Gamma_{1}|\rightarrow 0^{+}$. It is an interesting question whether we can further narrow this gap.
\end{itemize}

\subsection{Organization}
The organization of this paper is as follows. In Section \ref{Sec, preliminary}, we state several basic results which will be used later. Section \ref{Sec, lower bdd in convex domain} is devoted to demonstrate the main idea and a detailed proof of Theorem \ref{Thm, new lower bound of blow-up time}. The appendix justifies a representation formula for the solution which plays an essential role in the proof of the main theorem.

\section{Preliminary results}\label{Sec, preliminary}
The first result is about the continuity in time $t$ (as $t\rightarrow 0$) of the integral of the fundamental solution to the heat equation over the domain $\O$. 

\begin{lemma}\label{Lemma, lower bdd, int of fund soln in finite time}
Let $\O$ be a bounded open subset in $\m{R}^{N}(N\geq 2)$ with $C^{2}$ boundary. Define $F:\p\O\times[0,1]\rightarrow\m{R}$ by
\begin{align*}
F(x,t)=\left\{\begin{array}{lll}
\int_{\O}\Phi(x-y,t)\,dy &\text{for}& x\in\p\O,\, t\in(0,1],\\
1/2 &\text{for}& x\in\p\O,\, t=0.
\end{array}\right.\end{align*}
Then $F$ is continuous on $\p\O\times[0,1]$. As a result,
\be\label{lower bdd, int of fund soln in finite time}
b_1\triangleq \min\limits_{\p\O\times[0,1]} F\ee
is a positive constant depending only on $\O$ and the dimension $N$.
\end{lemma}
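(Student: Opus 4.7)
The plan is to establish continuity of $F$ in two regimes separately, then deduce positivity from continuity on the compact set $\partial\Omega\times[0,1]$ together with the obvious bounds $F(x,0)=1/2$ and $F(x,t)>0$ for $t>0$. On the open strip $\partial\Omega\times(0,1]$ continuity is routine: fixing $(x_0,t_0)$ with $t_0>0$, the integrand $\Phi(x-y,t)$ is uniformly bounded for $(x,t)$ in a small neighborhood of $(x_0,t_0)$ and depends continuously on $(x,t)$ for $y\in\overline{\Omega}$, so dominated convergence delivers the claim.

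The real work is to show that $\lim_{t\to 0^+}F(x,t)=1/2$ \emph{uniformly} in $x\in\partial\Omega$. The strategy is to compare $\Omega$ locally with the half-space
\[H_x=\{y\in\mathbb{R}^N:(y-x)\cdot\ora{n}(x)<0\},\]
for which a direct computation (rotate so that $\ora{n}(x)$ is the $N$-th coordinate direction; the $N-1$ transverse Gaussian integrals each equal $1$ and the last equals $\int_{-\infty}^{0}(4\pi t)^{-1/2}e^{-s^2/4t}\,ds=1/2$) gives $\int_{H_x}\Phi(x-y,t)\,dy=\tfrac{1}{2}$ for every $t>0$. Because $\partial\Omega$ is $C^2$ and compact, I would extract a single radius $r_0>0$ and curvature constant $K>0$, both independent of $x$, such that inside $B_{r_0}(x)$ the boundary is the graph $z_N=\varphi_x(z')$ of a $C^2$ function with $\varphi_x(0)=0$, $D\varphi_x(0)=0$, and $|\varphi_x(z')|\le K|z'|^2$. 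Consequently the symmetric difference $(\Omega\triangle H_x)\cap B_{r_0}(x)$ is contained in the thin layer $\{|z_N|\le K|z'|^2,\ |z'|<r_0\}$.

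I would then split $F(x,t)-\tfrac{1}{2}$ into a far piece over $\{|y-x|\ge r_0\}$, which decays like $e^{-r_0^2/(8t)}$ as $t\to 0^+$, and a near piece bounded by
\[\int_{|z'|<r_0}\int_{-K|z'|^2}^{K|z'|^2}\frac{1}{(4\pi t)^{N/2}}\,e^{-|z'|^2/(4t)}\,dz_N\,dz'\le \frac{2K}{(4\pi t)^{N/2}}\int_{\m{R}^{N-1}}|z'|^2 e^{-|z'|^2/(4t)}\,dz'=C(N)\,K\sqrt{t},\]
where the Gaussian exponential in the near piece is evaluated using $|z|^2\ge |z'|^2$ on the layer. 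Both bounds vanish as $t\to 0^+$ and depend on $x$ only through the uniform constants $r_0$ and $K$, so $F(x,t)\to \tfrac{1}{2}$ uniformly and $F$ extends continuously to $\partial\Omega\times\{0\}$. The main obstacle is precisely this uniformity: the per-point comparison with a half-space is elementary, but one must invoke compactness of $\partial\Omega$ and the $C^2$ regularity to obtain a single $r_0$ and $K$ that work at every boundary point; everything else is essentially bookkeeping with Gaussian integrals.
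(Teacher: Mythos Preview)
Your argument is correct. Continuity for $t>0$ by dominated convergence is routine, and your half-space comparison with the $C^{2}$ graph representation yields the uniform limit $F(x,t)\to 1/2$; the Gaussian moment computation giving the $C(N)K\sqrt{t}$ bound on the near piece is accurate, and the far piece is a standard Gaussian tail.

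The paper does not spell out a proof but indicates two routes. Your approach is exactly the first one (``standard analysis''), and your half-space heuristic is precisely what the paper describes in its remark following the lemma. The paper also mentions an alternative: from the identity
\[
\int_{\O}\Phi(x-y,t)\,dy-\int_{0}^{t}\int_{\p\O}D_{y}\big[\Phi(x-y,t-\tau)\big]\cdot\ora{n}(y)\,dS(y)\,d\tau=\frac{1}{2},
\]
one obtains $F(x,t)-\tfrac12$ as a double-layer boundary integral, and the uniform decay of that term as $t\to 0^{+}$ is a classical potential-theory estimate (of the same $O(\sqrt{t})$ type). That route trades the local graph computation for the representation formula machinery already developed in the paper; yours is more self-contained.
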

\begin{proof}
Since $\p\O$ has been assumed to be $C^{2}$, the proof can be carried out by standard analysis. We can also prove it by applying (\ref{identity, bdry}) and noticing the uniform decay of the term $$\int_{0}^{t}\int_{\p\O}\Big| D_{y}\big[\Phi(x-y,t-\tau)\big]\cdot\ora{n}(y)\Big|\,dS(y)\,d\tau$$ in $x\in\p\O$ as $t\rightarrow 0^{+}$. The details are omitted here.
\end{proof}

We want to remark that although $\int_{\m{R}^{N}}\Phi(x-y,t)\,dy=1$, the main contribution of the integral, when $t$ is small, only comes from the integral over a small ball $B_{r}(x)$ around $x$ (the smaller $t$ is, the smaller $r$ can be chosen). Now if the integral domain is a bounded region $\O$ instead of $\m{R}^{N}$ and if $x\in\p\O$, then the intersection $B_{r}(x)\bigcap \O$ will be nearly half of $B_{r}(x)$ as $r\rightarrow 0^{+}$. As a result, when $t\rightarrow 0^{+}$, the limit of $\int_{\O}\Phi(x-y,t)\,dy$ becomes 1/2 instead of 1.

The second result shows two identities concerning the fundamental solution to the heat equation.

\begin{lemma}\label{Lemma, identity}
Let $\O$ be a bounded open subset in $\m{R}^{N}(N\geq 2)$ with $C^{2}$ boundary. Define $\Phi$ as in (\ref{fund soln of heat eq}). Then 
\be\label{identity, bdry}
\int_{\O}\Phi(x-y,t)\,dy-\int_{0}^{t}\int_{\p\O}D_{y}\big[\Phi(x-y,t-\tau)\big]\cdot\ora{n}(y)\,dS(y)\,d\tau=\frac{1}{2}, \quad\forall\, x\in\p\O,\,t>0.\ee
In addition, if $\O$ is convex, then 
\be\label{identity, convex bdry}
\int_{\O}\Phi(x-y,t)\,dy+\int_{0}^{t}\int_{\p\O}\Big| D_{y}\big[\Phi(x-y,t-\tau)\big]\cdot\ora{n}(y)\Big|\,dS(y)\,d\tau=\frac{1}{2}, \quad\forall\, x\in\p\O,\,t>0. \ee
\end{lemma}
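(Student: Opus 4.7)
The plan is to deduce both identities from the standard Green-type representation formula for the heat equation (which is established in the appendix and referenced as (\ref{rep formula})), using $w \equiv 1$ as a test function.

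For (\ref{identity, bdry}), I would apply the representation formula to the constant function $w(y,\tau) \equiv 1$ on $\Omega \times (0,t)$. Since $w_\tau - \Delta w = 0$, $\partial w/\partial n = 0$ on $\partial \Omega$, and $w(y,0) = 1$, the interior representation at $x \in \Omega$ reads
\[
1 = \int_\Omega \Phi(x-y,t)\,dy - \int_0^t \int_{\partial\Omega} D_y[\Phi(x-y,t-\tau)] \cdot \vec{n}(y)\, dS(y)\, d\tau.
\]
At a boundary point $x \in \partial\Omega$, the double-layer heat potential has its classical jump of $1/2$, so the left-hand side becomes $1/2$; this is precisely (\ref{identity, bdry}). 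The jump relation is exactly the ingredient that the appendix develops in order to justify (\ref{rep formula}), so I would cite it directly rather than rederive it.

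For (\ref{identity, convex bdry}), I would reduce to (\ref{identity, bdry}) by showing that the integrand in the boundary term of (\ref{identity, bdry}) is pointwise nonpositive when $\Omega$ is convex. A direct differentiation of (\ref{fund soln of heat eq}) gives
\[
D_y \Phi(x-y,t-\tau) = \frac{x-y}{2(t-\tau)}\, \Phi(x-y,t-\tau),
\]
so that
\[
D_y[\Phi(x-y,t-\tau)] \cdot \vec{n}(y) = \frac{\Phi(x-y,t-\tau)}{2(t-\tau)}\, (x-y)\cdot \vec{n}(y).
\]
By convexity, the supporting hyperplane to $\Omega$ at any $y \in \partial\Omega$ contains all of $\overline{\Omega}$ on the interior side, so $(x-y)\cdot \vec{n}(y) \le 0$ for every $x \in \overline{\Omega}$ and $y \in \partial\Omega$. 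Consequently $|D_y\Phi \cdot \vec{n}(y)| = -D_y\Phi \cdot \vec{n}(y)$, and substituting this into the left-hand side of (\ref{identity, convex bdry}) turns it into the left-hand side of (\ref{identity, bdry}), which equals $1/2$.

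The main (and only) obstacle is the jump $1/2$ in the boundary representation formula. I expect to bypass this by invoking (\ref{rep formula}) from the appendix with $u \equiv 1$; if one wished to argue from scratch, the alternative would be to apply the interior identity on $\Omega \setminus \overline{B_\varepsilon(x)}$ for $x \in \partial\Omega$, let $\varepsilon \to 0^+$, and use the fact that the portion of the $\varepsilon$-sphere lying inside the convex region $\Omega$ approaches a hemisphere, which accounts for the factor $1/2$. Once (\ref{identity, bdry}) is in hand, (\ref{identity, convex bdry}) is an immediate corollary of the convexity computation above.
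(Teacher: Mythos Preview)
Your proposal is correct and follows essentially the same route as the paper: plug the constant solution $u\equiv 1$ of the homogeneous Neumann problem into the boundary representation formula (\ref{rep formula}) (with $\Gamma_1=\emptyset$) to get (\ref{identity, bdry}), and then use convexity to fix the sign of $D_y\Phi\cdot\ora{n}(y)$ and deduce (\ref{identity, convex bdry}). The only difference is that you spell out the gradient computation and the supporting-hyperplane argument, whereas the paper simply asserts the sign; otherwise the arguments coincide.
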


\begin{proof}
Consider the problem 
\begin{equation}\label{test model}
\left\{\begin{array}{lll}
u_{t}(x,t)=\Delta u(x,t) &\text{in}& \Omega\times (0,\infty),\\
\frac{\partial u}{\partial n}(x,t)=0 &\text{on}& \p\O\times (0,\infty),\\
u(x,0)=1 &\text{in}& \Omega,
\end{array}\right.
\end{equation}
it obviously has the unique solution $u\equiv 1$ on $\ol{\O}\times[0,\infty)$. As a result, (\ref{identity, bdry}) follows by plugging $u\equiv 1$ into the representation formula (\ref{rep formula}) (taking $\Gamma_{1}=\emptyset$). Now if $\O$ is convex, then $D_{y}\big[\Phi(x-y,t-\tau)\big]\cdot\ora{n}(y)\leq 0$ for any $x,\,y\in\p\O$ and $0\leq \tau<t$. Thus, (\ref{identity, bdry}) implies (\ref{identity, convex bdry}).
\end{proof}

Finally, we present an elementary boundary-time integral estimate which is obtained by applying Holder's inequality. For the convenience of notation, for any $\alpha\in\big[0,\frac{1}{N-1}\big)$, we denote
\be\label{def of N_(alpha)}
N_{\alpha}=\frac{1-(N-1)\alpha}{2}.\ee
It is readily seen that $0<N_{\alpha}\leq \frac{1}{2}$.
\begin{lemma}\label{Lemma, bound by power of Gamma_1}
Let $\O$ be a bounded open subset in $\m{R}^{N}(N\geq 2)$ with $C^{2}$ boundary. Then there exists $C=C(N,\O)$ such that for any $\Gamma_{1}\subseteq \O$, $\alpha\in\big[0,\frac{1}{N-1}\big)$, $x\in\p\O$ and $t>0$,
\be\label{bound by power of gamma 1}
\int_{0}^{t}\int_{\Gamma_1}\Phi(x-y,t-\tau)\,dS(y)\,d\tau\leq \frac{C\,|\Gamma_1|^{\alpha}\,t^{N_{\alpha}}}{N_{\alpha}},\ee
where $N_{\alpha}$ is defined as in (\ref{def of N_(alpha)}). In particular, if $\alpha=0$, then
\[\int_{0}^{t}\int_{\Gamma_1}\Phi(x-y,t-\tau)\,dS(y)\,d\tau\leq C\,\sqrt{t}.\]
\end{lemma}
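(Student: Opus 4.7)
The plan is to apply Hölder's inequality on the surface integral to isolate a factor of $|\Gamma_1|^{\alpha}$, then reduce the remaining Gaussian surface integral to a Gaussian integral on $\mathbb{R}^{N-1}$ via a local parametrization of $\partial\Omega$, and finally integrate in $\tau$.

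More concretely, for $\alpha\in(0,\frac{1}{N-1})$ I would set $p=1/\alpha$ and $p'=1/(1-\alpha)$ so that $1/p+1/p'=1$, and write
\bes
\int_{\Gamma_1}\Phi(x-y,t-\tau)\,dS(y)\leq |\Gamma_1|^{\alpha}\left(\int_{\Gamma_1}\Phi^{p'}(x-y,t-\tau)\,dS(y)\right)^{1-\alpha}.
\ees
The remaining task is to bound $\int_{\partial\Omega}\Phi^{p'}(x-y,s)\,dS(y)$ uniformly in $x\in\partial\Omega$. Since $\partial\Omega$ is compact and $C^{2}$, a standard covering/local-chart argument flattens $\partial\Omega$ near $x$ to a graph over $\mathbb{R}^{N-1}$ with bounded Jacobian, and one uses $|x-y|\geq c|\pi(y-x)|$ where $\pi$ denotes projection onto the tangent hyperplane. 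This reduces the surface integral to a Gaussian integral on $\mathbb{R}^{N-1}$, giving
\bes
\int_{\partial\Omega}\Phi^{p'}(x-y,s)\,dS(y)\leq \frac{C(N,\Omega)}{s^{Np'/2-(N-1)/2}}.
\ees
Raising to the $(1-\alpha)$-th power and plugging in $1-\alpha=1/p'$ gives the exponent $\frac{1+(N-1)\alpha}{2}$ on $s=t-\tau$, i.e.\
\bes
\int_{\Gamma_1}\Phi(x-y,t-\tau)\,dS(y)\leq \frac{C\,|\Gamma_1|^{\alpha}}{(t-\tau)^{(1+(N-1)\alpha)/2}}.
\ees

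Finally, the condition $\alpha<\frac{1}{N-1}$ is exactly what makes $(1+(N-1)\alpha)/2<1$, so integrating in $\tau$ from $0$ to $t$ produces $t^{N_{\alpha}}/N_{\alpha}$ with $N_{\alpha}=\frac{1-(N-1)\alpha}{2}$, yielding the claimed bound $C|\Gamma_1|^{\alpha}t^{N_{\alpha}}/N_{\alpha}$. For $\alpha=0$ no Hölder step is needed: one directly uses $\int_{\partial\Omega}\Phi(x-y,s)\,dS(y)\leq C/\sqrt{s}$ from the same parametrization and integrates in $\tau$ to obtain the $C\sqrt{t}$ bound; this also matches the general formula as $\alpha\downarrow 0$.

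The main obstacle is establishing the bound on the surface Gaussian integral with a constant depending only on $N$ and $\Omega$, uniformly in $x\in\partial\Omega$ and $s>0$. This needs the $C^{2}$ smoothness of $\partial\Omega$ (to make the tubular neighborhood/graph representation uniform) together with compactness of $\partial\Omega$ (to obtain a finite cover with a single uniform constant), and one must carefully treat the part of $\partial\Omega$ away from $x$, where the Gaussian factor is $O(1)$ in $s$ but the exponential is strictly less than $1$ so contributes only a harmless additive constant absorbed into $C(N,\Omega)$.
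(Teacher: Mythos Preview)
Your proposal is correct and follows essentially the same approach as the paper: H\"older's inequality with dual exponents $p'=1/(1-\alpha)$ (the paper's $m$) to extract $|\Gamma_1|^{\alpha}$, then the uniform surface-Gaussian bound $\int_{\partial\Omega}e^{-|x-y|^2/(4\tau)}\,dS(y)\leq C\,\tau^{(N-1)/2}$, and finally integration in $\tau$. The only difference is that the paper applies H\"older to the exponential factor alone (with $\tau^{-N/2}$ pulled out) and cites the surface-Gaussian bound from Lemma~3.1 of \cite{YZ16} as a black box, whereas you apply H\"older to the full $\Phi$ and sketch the local-chart derivation of that bound; these are cosmetic variations of the same argument.
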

\begin{proof}
Fix $\Gamma_{1}\subseteq\O$, $\alpha\in\big[0,\frac{1}{N-1}\big)$, $x\in\p\O$ and $t>0$. We denote 
\[LHS=\int_{0}^{t}\int_{\Gamma_1}\Phi(x-y,t-\tau)\,dS(y)\,d\tau.\]
By a change of variable in time, 
\begin{align}
LHS &=\int_{0}^{t}\int_{\Gamma_1}\Phi(x-y,\tau)\,dS(y)\,d\tau \notag\\
&=\frac{1}{(4\pi)^{N/2}}\int_{0}^{t}\tau^{-N/2}\int_{\Gamma_1}e^{-|x-y|^{2}/(4\tau)}\,dS(y)\,d\tau. \label{int over gamma 1 and time}
\end{align}
For any $m\geq 1$, applying Holder's inequality,
\be\label{int over gamma 1}
\int_{\Gamma_1}e^{-|x-y|^{2}/(4\tau)}\,dS(y)\leq \bigg(\int_{\Gamma_1}e^{-m\,|x-y|^{2}/(4\tau)}\,dS(y)\bigg)^{1/m}\,|\Gamma_1|^{(m-1)/m}. \ee
Denote 
\be\label{upper bdd, int of fund soln over bdry}
B_1=\sup_{\tau>0}\,\sup_{x\in\p\O}\,\tau^{-\frac{N-1}{2}}\int_{\p\O}e^{-|x-y|^2/(4\tau)}\,dS(y).\ee
It is shown in Lemma 3.1 of \cite{YZ16} that $B_1$ is a finite positive constant depending only on $\O$ and $N$. As a result, 
\begin{align*}
\int_{\Gamma_1}e^{-m\,|x-y|^{2}/(4\tau)}\,dS(y) &= \int_{\Gamma_1} e^{-\,|x-y|^{2}\big/\big[4(\tau/m)\big]}\,dS(y) \\
&\leq \Big(\frac{\tau}{m}\Big)^{(N-1)/2}\,B_1 \\
& \leq \tau^{(N-1)/2}\,B_1.
\end{align*}
Combining this inequality with (\ref{int over gamma 1}), 
\begin{align}
\int_{\Gamma_1}e^{-|x-y|^{2}/(4\tau)}\,dS(y) &\leq B_{1}^{1/m}\,\tau^{(N-1)/(2m)}\,|\Gamma_1|^{(m-1)/m} \notag\\
&\leq (B_{1}+1)\,\tau^{(N-1)/(2m)}\,|\Gamma_1|^{(m-1)/m}. \label{bound for int over bdry in m}
\end{align} 
Plugging (\ref{bound for int over bdry in m}) into (\ref{int over gamma 1 and time}),
\be\label{LHS bound by m}
LHS\leq \frac{B_{1}+1}{(4\pi)^{N/2}}\,|\Gamma_1|^{(m-1)/m}\int_{0}^{t}\tau^{-\frac{N}{2}+\frac{N-1}{2m}}\,d\tau.\ee
Choose $$m=\dfrac{1}{1-\alpha}.$$
Then $m\geq 1$ and $(m-1)/m=\alpha$. Therefore, (\ref{LHS bound by m}) becomes
\begin{align*}
LHS &\leq \frac{B_{1}+1}{(4\pi)^{N/2}}\,|\Gamma_1|^{\alpha}\int_{0}^{t}\tau^{\frac{1-(N-1)\alpha}{2}-1}\,d\tau\\
&= \frac{2\,(B_1+1)}{(4\pi)^{N/2}\,\big[1-(N-1)\alpha\big]}\,|\Gamma_1|^{\alpha}\,t^{\frac{1-(N-1)\alpha}{2}},
\end{align*}
where the last equality takes advantage of the assumption that $\alpha<\frac{1}{N-1}$.
\end{proof}

\section{Lower bounds in the convex domain case}
\label{Sec, lower bdd in convex domain}
\subsection{Main idea}
In this section, $\O$ is assumed to be convex. In addition, $M_{0}$ and $M(t)$ are still defined as in (\ref{initial max}) and (\ref{max function at time t}). First of all, let us recall the method in \cite{YZ16} on the lower bound estimate of $T^{*}$. By considering the maximum of $u$ on $\p\O$ for each time $t$, the authors introduce
\be\label{bdry max}\wt{M}(t)= \max_{x\in\p\O}u(x,t).\ee
Note $\wt{M}(t)$ is different from $M(t)$. For any $t>0$, there exists $x^{0}\in\p\O$ such that $u(x^{0},t)=\wt{M}(t)$, so it follows from the representation formula (\ref{rep formula}) that
\begin{align}
\wt{M}(t) \leq &\,\, 2M_{0}\int_{\O}\Phi(x^{0}-y,t)\,dy+2\int_{0}^{t}\wt{M}(\tau)\int_{\p\O}\Big| D_{y}\big[\Phi(x^{0}-y,t-\tau)\big]\cdot\ora{n}(y)\Big|\,dS(y)\,d\tau \notag\\
&+2\int_{0}^{t}\wt{M}^{q}(\tau)\int_{\Gamma_1}\Phi(x^{0}-y,t-\tau)\,dS(y)\,d\tau \notag\\
\triangleq &\,\, I+II+III.
\label{index for terms}
\end{align}
After estimating
\[\int_{\p\O}\Big| D_{y}\big[\Phi(x^{0}-y,t-\tau)\big]\cdot\ora{n}(y)\Big|\,dS(y) \quad\text{and}\quad \int_{\Gamma_1}\Phi(x^{0}-y,t-\tau)\,dS(y),\]
in the terms $II$ and $III$, they achieve the lower bound (\ref{lower bdd, small broken part}) by applying a Gronwall-type technique to (\ref{index for terms}). However, this lower bound is only logarithm of $|\Gamma_{1}|^{-1}$ as $|\Gamma_{1}|\rightarrow 0^{+}$. The obstruction that prevents this method obtaining a polynomial order of $|\Gamma_{1}|^{-1}$ is explained through the remark below.

\begin{remark}
Consider the following two simple integral inequalities. 
First,
\be\label{first ex}\left\{\begin{array}{l}
\phi_{1}(t)\leq A+\int_{0}^{t}\phi_{1}(\tau)\,d\tau+|\Gamma_{1}|\int_{0}^{t}\phi_{1}^{q}(\tau)\,d\tau, \quad t>0, \\
\phi_{1}(0)=A>0.
\end{array}\right.\ee
It is easy to see by the Gronwall's inequality that the blow-up time $T_{1}^{*}$ of (\ref{first ex}) satisfies
\[T_{1}^{*}\geq \frac{1}{q-1}\ln\bigg(1+\frac{1}{A^{q-1}\,|\Gamma_{1}|}\bigg),\]
which is of order $\ln(|\Gamma_{1}|^{-1})$ as $|\Gamma_{1}|\rightarrow 0^{+}$.
Secondly,
\be\label{second ex}\left\{\begin{array}{l}
\phi_{2}(t)\leq A+|\Gamma_{1}|\int_{0}^{t}\phi_{2}^{q}(\tau)\,d\tau, \quad t>0, \\
\phi_{2}(0)=A>0.
\end{array}\right.\ee
Again by applying Gronwall's inequality, the blow-up time $T_{2}^{*}$ of (\ref{second ex}) satisfies
\[T_{2}^{*}\geq \frac{1}{(q-1)A^{q-1}|\Gamma_{1}|},\]
which is of order $|\Gamma_{1}|^{-1}$. Comparing these two differential equations, (\ref{first ex}) contains a linear term $\int_{0}^{t}\phi_{1}(\tau)\,d\tau$, however, (\ref{second ex}) does not. So this term is the obstruction that prevents the lower bound being a polynomial order of $|\Gamma_{1}|^{-1}$. 
\end{remark}

Thus, coming back to (\ref{index for terms}), if the linear term II can be eliminated, then the lower bound is expected to be a polynomial order of $|\Gamma_{1}|^{-1}$. Under the convexity assumption of $\O$, we will develop a new method which  eliminates the linear term II in a certain sense \big(see (\ref{max ineq, kth step, simplify}) through (\ref{max ineq, kth step, interm})\big). In addition, this new method is much more accurate than the Gronwall's type argument. The details will be shown in the proof of Theorem \ref{Thm, new lower bound of blow-up time}. 

The initial idea of the proof is as follows. First, we chop the range of $M(t)$ into small pieces $[M_{k-1},M_{k}]$ $(k\geq 1)$ and denote $t_{k}$ to be the time that $M(t)$  increases from $M_{k-1}$ to $M_{k}$. Secondly, we will find a lower bound $t_{k*}$ for each $t_{k}$. Finally, $\sum_{k=0}^{\infty}t_{k*}$ is a lower bound for $T^{*}$. However, it is very difficult to find a precise relation between the sum $\sum_{k=1}^{\infty}t_{k*}$ and the order of $|\Gamma_{1}|^{-1}$ since the expression of $\sum_{k=1}^{\infty}t_{k*}$ is very complicated. As a result, in order to obtain a lower bound with simple formula, what we actually do in the proof of Theorem \ref{Thm, new lower bound of blow-up time} is to find a uniform lower bound $t_{*}$ for finitely many $t_{k}$, say $1\leq k\leq L$, then $Lt_{*}$ serves as a lower bound. This way greatly reduces the computations and yields a lower bound with nice expression. The analysis based on the representation formula (\ref{rep formula from any time}) and the identity (\ref{identity, convex bdry}). The delicate part is how to choose $\{M_{k}\}_{1\leq k\leq L}$ and $t_{*}$ such that $Lt_{*}$ is maximized. 


\subsection{Proof of Theorem \ref{Thm, new lower bound of blow-up time}}
Now we start to prove the main result of this paper.
\begin{proof}[{\bf Proof of Theorem \ref{Thm, new lower bound of blow-up time}.}]

Let $M(t)$ be defined as in (\ref{max function at time t}) and let $t_{*}\in(0, 1]$ be a positive constant which will be determined later. We will first use induction to construct a finite strictly increasing sequence $\{M_{j}\}_{0\leq j\leq L}$ such that if $T_{k}$ denotes the first time that $M(t)$ reaches $M_{k}$, then $T_{k}\geq kt_{*}$ for $0\leq k\leq L$ (the total steps $L$ depends on $t_{*}$). Then $t_{*}$ will be chosen so that $Lt_{*}$ is maximized.

\begin{itemize}
\item {\bf Step $\bf 0$.} Define $M_{0}$ as in (\ref{initial max}). Then $T_{0}=0$.

\item {\bf Step $\bf k$ $\bf (k\geq 1)$.} Suppose $\{M_{j}\}_{0\leq j\leq k-1}$ have been defined such that $T_{k-1}\geq (k-1)t_{*}$. We intend to construct $M_{k}$ in this step such that $T_{k}\geq k t_{*}$. For some $\lam_{k}>1$ to be determined, we define 
\be\label{notation in kth step}
M_{k}=\lam_{k}M_{k-1} \ee
and denote 
\be\label{difference in time, kth step}
t_{k}=T_{k}-T_{k-1}. \ee
Next we want to choose suitable $\lambda_{k}$ such that $t_{k}\geq t_{*}$, which implies $T_{k}\geq kt_{*}$. 

Since $M_{k}>M_{0}$ and $T_{k}$ is the first time that $M(t)$ reaches $M_{k}$, it follows from the maximum principle that   there exists $x^{k}\in\p\O$ such that $u(x^{k},T_{k})=M_{k}$. Applying the time-shifted representation formula (\ref{rep formula from any time}) with $T=T_{k-1}$ and $(x,t)=(x^{k}, t_{k})$, 
\begin{align}
u(x^{k},T_{k}) = &\,\, 2\int_{\O}\Phi(x^{k}-y,t_{k})\,u(y,T_{k-1})\,dy \notag\\
&-2\int_{0}^{t_{k}}\int_{\p\O}D_{y}\big[\Phi(x^{k}-y, t_{k}-\tau)\big]\cdot \ora{n}(y)\,u(y,T_{k-1}+\tau)\,dS(y)\,d\tau \notag\\
&+2\int_{0}^{ t_{k}}\int_{\Gamma_1}\Phi(x^{k}-y, t_{k}-\tau)\,u^{q}(y,T_{k-1}+\tau)\,dS(y)\,d\tau \label{rep formula for kth step}.  
\end{align}
As a result, 
\begin{align}
M_{k} \leq &\,\, 2M_{k-1}\int_{\O}\Phi(x^{k}-y, t_{k})\,dy \notag\\
&+2M_{k}\int_{0}^{ t_{k}}\int_{\p\O}\Big|D_{y}\big[\Phi(x^{k}-y,t_{k}-\tau)\big]\cdot \ora{n}(y)\Big|\,dS(y)\,d\tau \notag\\
&+2M_{k}^{q}\int_{0}^{ t_{k}}\int_{\Gamma_1}\Phi(x^{k}-y, t_{k}-\tau)\,dS(y)\,d\tau. \label{max ineq, kth step, simplify}
\end{align}
Since $\O$ is convex, it follows from (\ref{identity, convex bdry}) that
\[\int_{0}^{ t_{k}}\int_{\p\O}\Big| D_{y}\big[\Phi(x^{k}-y, t_{k}-\tau)\big]\cdot\ora{n}(y)\Big|\,dS(y)\,d\tau=\frac{1}{2}-\int_{\O}\Phi(x^{k}-y, t_{k})\,dy.\]
Plugging this identity into (\ref{max ineq, kth step, simplify}) and simplifying, 
\begin{align}
M_{k}\int_{\O}\Phi(x^{k}-y, t_{k})\,dy\leq M_{k-1}\int_{\O}\Phi(x^{k}-y, t_{k})\,dy+M_{k}^{q}\int_{0}^{ t_{k}}\int_{\Gamma_1}\Phi(x^{k}-y, t_{k}-\tau)\,dS(y)\,d\tau \label{max ineq, kth step, interm}.
\end{align} 
Equivalently,
\begin{align}
(M_{k}-M_{k-1})\int_{\O}\Phi(x^{k}-y, t_{k})\,dy\leq M_{k}^{q}\int_{0}^{ t_{k}}\int_{\Gamma_1}\Phi(x^{k}-y, t_{k}-\tau)\,dS(y)\,d\tau \label{max ineq, kth step, final}.
\end{align} 

{\bf If $t_{k}>1$, we automatically have $t_{k}\geq t_{*}$ since $t_{*}\leq 1$. So next we assume $t_{k}\leq 1$.} Then it follows from Lemma \ref{Lemma, lower bdd, int of fund soln in finite time} that
\be\label{lower bdd for LHS, kth step}
\int_{\O}\Phi(x^{k}-y, t_{k})\,dy\geq b_1. \ee
In addition, Lemma \ref{Lemma, bound by power of Gamma_1} implies the existence of a constant $C=C(N,\O)$ such that
\begin{align}
\int_{0}^{ t_{k}}\int_{\Gamma_1}\Phi(x^{k}-y, t_{k}-\tau)\,dS(y)\,d\tau &\leq \frac{C\,|\Gamma_1|^{\alpha}\,t_{k}^{N_{\alpha}}}{N_{\alpha}}, \label{upper bdd for RHS, kth step}
\end{align}
where $N_{\alpha}$ is defined as in (\ref{def of N_(alpha)}). Plugging (\ref{lower bdd for LHS, kth step}) and (\ref{upper bdd for RHS, kth step}) into (\ref{max ineq, kth step, final}), 
\be\label{relation for max value, kth}\frac{M_{k}-M_{k-1}}{M_{k}^{q}}\leq \frac{C\,|\Gamma_1|^{\alpha}\,t_{k}^{N_{\alpha}}}{b_1 N_{\alpha}}. \ee
Recalling that $M_{k}=\lam_{k}M_{k-1}$, then
\be\label{relation of lamda and M for small time, kth}
\frac{\lam_{k}-1}{\lam_{k}^{q}M_{k-1}^{q-1}}\leq \frac{C\,|\Gamma_1|^{\alpha}\,t_{k}^{N_{\alpha}}}{b_1 N_{\alpha}}. \ee

Based on this observation, {\bf if there exists $\lam_{k}>1$ such that }
\be\label{choice of lam, kth}
\frac{\lam_{k}-1}{\lam_{k}^{q}\,M_{k-1}^{q-1}}= \delta_{1}, \ee
where 
\be\label{choice of delta_1}
\delta_{1}\triangleq \frac{C\,|\Gamma_1|^{\alpha}\,t_{*}^{N_{\alpha}}}{b_1\, N_{\alpha}}, \ee
then (\ref{relation of lamda and M for small time, kth}) through (\ref{choice of delta_1}) together implies that $t_{k}\geq t_{*}$. As a conclusion, in {\bf Step $\bf k$}, as long as (\ref{choice of lam, kth}) has a solution $\lambda_{k}>1$, we define 
$M_{k}$ as in (\ref{notation in kth step}). Then $t_{k}\geq t_{*}$ and therefore $T_{k}\geq kt_{*}$.
\end{itemize}

According to Lemma \ref{Lemma, lower bdd of steps, basic}, if
$1\leq k\leq L$ with 
\[L>\frac{1}{10(q-1)}\Big(\frac{1}{M_{0}^{q-1}\delta_{1}}-9q\Big),\]
then there exists a solution $\lam_{k}>1$ to  (\ref{choice of lam, kth}). So we can construct a finite sequence $\{M_{j}\}_{0\leq j\leq L}$ such that $T_{L}\geq Lt_{*}$. As a result,
\be\label{lower bdd in the proof}
T^{*}>T_{L}\geq Lt_{*}>\frac{1}{10(q-1)}\Big(\frac{1}{M_{0}^{q-1}\delta_{1}}-9q\Big)t_{*}.\ee
Note that when $t_{*}\rightarrow 0^{+}$, $\delta_{1}$ also tends to $0^{+}$. So if we choose $t_{*}$ to be small, then the right hand side of (\ref{lower bdd in the proof}) is positive.

The final question is how to choose $t_{*}\in(0,1]$ to maximize the right hand side of (\ref{lower bdd in the proof}). Plugging (\ref{choice of delta_1}) into (\ref{lower bdd in the proof}), 
\begin{align}
T^{*} &\geq \frac{1}{10(q-1)}\bigg(\frac{b_{1}N_{\alpha}}{M_{0}^{q-1}\,C\, |\Gamma_1|^{\alpha}\,t_{*}^{N_{\alpha}}}-9q\bigg)\,t_{*} \notag\\
&=\frac{9q}{10(q-1)}\bigg(\frac{C_{1}N_{\alpha}}{q\,M_{0}^{q-1}\,|\Gamma_{1}|^{\alpha}}t_{*}^{1-N_{\alpha}}-t_{*}\bigg), \label{lower bdd by t_(*)}
\end{align}
where $C_{1}\triangleq b_{1}/(9C)$ is a constant only depending on $N$ and $\O$. In order to maximize the right hand side of (\ref{lower bdd by t_(*)}), define \[A=\frac{C_{1}N_{\alpha}}{q M_{0}^{q-1}\,|\Gamma_{1}|^{\alpha}}\quad\text{and}\quad \beta=1-N_{\alpha}\in[1/2,1).\] 
Regarding the right hand side of (\ref{lower bdd by t_(*)}) to be a function of $t_{*}$ on $[0,1]$, it follows from Lemma \ref{Lemma, simple ineq} that the maximum of this function is
\[(1-\beta)A\,(\min\{1, \beta A\})^{\beta/(1-\beta)}\]
and this maximum occurs at $t_{*}=(\min\{1, \beta A\})^{1/(1-\beta)}$. As a result,
\begin{align*}
T^{*} &\geq \frac{9q}{10(q-1)}\,(1-\beta)A\,\big(\min\{1,\beta A\}\big)^{\beta/(1-\beta)}.
\end{align*}
Noticing that $\beta\geq 1/2$, so
\begin{align}
T^{*} &\geq \frac{9q}{10(q-1)}\,(1-\beta)A\,\Big(\min\Big\{1,\frac{A}{2}\Big\}\Big)^{\beta/(1-\beta)} \notag\\
&= \frac{9C_{1}N_{\alpha}^{2}}{10(q-1)M_{0}^{q-1}\,|\Gamma_{1}|^{\alpha}}\bigg(\min\bigg\{1,\,\frac{C_{1}N_{\alpha}}{2q M_{0}^{q-1}\,|\Gamma_{1}|^{\alpha}}\bigg\}\bigg)^{\frac{1}{N_{\alpha}}-1} \notag\\
&\geq \frac{C_{2}}{(q-1)M_{0}^{q-1}\,|\Gamma_{1}|^{\alpha}}\,\bigg(\min\bigg\{1,\frac{1}{q M_{0}^{q-1}|\Gamma_{1}|^{\alpha}}\bigg\}\bigg)^{\frac{1}{N_{\alpha}}-1}, \label{lower bdd, complete}
\end{align}
where 
\be\label{const on alpha}
C_{2}=\frac{9C_{1}N_{\alpha}^{2}}{10}\,\Big(\min\Big\{1,\frac{C_{1}N_{\alpha}}{2}\Big\}\Big)^{\frac{1}{N_{\alpha}}-1}\ee
is a constant depending on $N$, $\O$ and $\alpha$. 

In particular, if we choose $\alpha=0$ in (\ref{lower bdd, complete}) and (\ref{const on alpha}), then it follows from $N_{\alpha}=\frac{1-(N-1)\alpha}{2}=\frac{1}{2}$ that
\[T^{*}\geq \frac{C_{3}}{(q-1)M_{0}^{q-1}}\min\bigg\{1,\frac{1}{q M_{0}^{q-1}}\bigg\},\]
where $C_{3}$ is a positive constant only depending on $N$ and $\O$.
\end{proof}

Finally we will prove two technical results, Lemma \ref{Lemma, lower bdd of steps, basic} and Lemma \ref{Lemma, simple ineq},   
which were used in the above proof. In the following, for any $q>1$, we write  
\be\label{const on q}
E_{q}=(q-1)^{q-1}/q^q. \ee
By elementary calculus,
\be\label{est on E_q}
\frac{1}{3q}<E_{q}<\min\Big\{ \frac{1}{q}, \frac{1}{(q-1)\,e}\Big\}<1. \ee
Before stating Lemma \ref{Lemma, lower bdd of steps, basic}, we discuss a simple property as below.

\begin{lemma}\label{Lemma, criteria for step continue}
For any $q>1$, write $E_{q}$ as in (\ref{const on q}) and define $g:(1,\infty)\rightarrow \m{R}$ by
\be\label{auxiliary fcn}
g(\lambda)=\frac{\lambda-1}{\lambda^{q}}.\ee
Then the following two claims hold.
\begin{itemize}
\item[(1)] For any $y\in(0,E_{q}]$, there exists unique $\lambda\in\big(1,\frac{q}{q-1}\big]$ such that $g(\lambda)=y$.
\item[(2)] For any $y>E_{q}$, there does not exist $\lambda>1$ such that $g(\lambda)=y$.
\end{itemize}
\end{lemma}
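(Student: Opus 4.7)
The whole lemma is a one-variable calculus statement about the shape of the function $g(\lambda)=(\lambda-1)/\lambda^{q}$ on $(1,\infty)$, so the plan is simply to locate its critical point, show it is the global maximum, and then read off both claims from monotonicity.

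First I would compute
\[
g'(\lambda)=\frac{\lambda^{q}-(\lambda-1)\,q\lambda^{q-1}}{\lambda^{2q}}=\frac{q-(q-1)\lambda}{\lambda^{q+1}},
\]
which has a unique zero on $(1,\infty)$ at $\lambda^{*}=q/(q-1)$. From the sign of the numerator one sees that $g$ is strictly increasing on $\bigl(1,\tfrac{q}{q-1}\bigr)$ and strictly decreasing on $\bigl(\tfrac{q}{q-1},\infty\bigr)$, with $g(1^{+})=0$ and $g(\lambda)\to 0^{+}$ as $\lambda\to\infty$. Evaluating at the critical point gives
\[
g\!\left(\tfrac{q}{q-1}\right)=\frac{1/(q-1)}{\bigl(q/(q-1)\bigr)^{q}}=\frac{(q-1)^{q-1}}{q^{q}}=E_{q},
\]
so $E_{q}=\max_{\lambda>1}g(\lambda)$.

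Claim $(2)$ is then immediate: since $g(\lambda)\leq E_{q}$ for all $\lambda>1$, the equation $g(\lambda)=y$ has no solution when $y>E_{q}$. For claim $(1)$, fix $y\in(0,E_{q}]$. Because $g$ is continuous and strictly increasing on $\bigl(1,\tfrac{q}{q-1}\bigr]$ with $g(1^{+})=0<y$ and $g\bigl(\tfrac{q}{q-1}\bigr)=E_{q}\geq y$, the intermediate value theorem produces a $\lambda\in\bigl(1,\tfrac{q}{q-1}\bigr]$ with $g(\lambda)=y$, and strict monotonicity on this interval gives uniqueness inside it, as claimed.

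There is no real obstacle here; the only thing to be careful about is the endpoint behaviour at $\lambda=q/(q-1)$, so that the case $y=E_{q}$ is covered (the solution is then precisely $\lambda=q/(q-1)$), and making sure the interval in the statement is half-open on the left so that the limiting value $g(1^{+})=0$ is correctly excluded.
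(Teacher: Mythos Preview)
Your proof is correct and follows essentially the same approach as the paper: both identify the critical point $\lambda=q/(q-1)$, verify that $g$ is strictly increasing before it and strictly decreasing after, compute the maximum value $E_{q}$, and then read off the two claims. You simply supply the explicit derivative computation and the IVT argument that the paper leaves implicit.
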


\begin{proof}
Since $g$ is strictly increasing on the interval $\big(1,\frac{q}{q-1}\big]$ and strictly decreasing on the interval $\big[\frac{q}{q-1}, \infty\big)$, it reaches the maximum at $\lambda=q/(q-1)$. Noticing that
\[g\Big(\frac{q}{q-1}\Big)=\frac{(q-1)^{q-1}}{q^q}=E_{q},\]
then the claims (1) and (2) follow directly. 
\end{proof}

\begin{lemma}\label{Lemma, lower bdd of steps, basic}
Given $q>1$, $M_{0}>0$ and $\delta_{1}>0$. Denote $E_{q}$ as in (\ref{const on q}) and construct a (finite) sequence $\{M_{k}\}_{k\geq 0}$ inductively as follows (note $M_{0}$ has been given). Suppose $M_{k-1}$ has been constructed for some $k\geq 1$, then whether defining $M_{k}$ depends on how large $M_{k-1}$ is.

\begin{itemize}
\item[$\diamond$] If $M_{k-1}^{q-1}\,\delta_{1}\leq E_{q}$, then Lemma \ref{Lemma, criteria for step continue} asserts there exists a unique $\lambda_{k}\in \big(1,\frac{q}{q-1}\big]$ such that 
\be\label{choice of step length}
\frac{\lam_{k}-1}{\lam_{k}^{q}}=M_{k-1}^{q-1}\,\delta_{1}. \ee
Then we define $M_{k}=\lam_{k}M_{k-1}$ and continue to the next step. 

\item[$\diamond$] If $M_{k-1}^{q-1}\,\delta_{1}>E_{q}$, then Lemma \ref{Lemma, criteria for step continue} implies the nonexistence of $\lam_{k}>1$ such that (\ref{choice of step length}) holds. So we stop the construction.  
\end{itemize}
We claim this construction stops in finite steps and if the last term is denoted as $M_{L}$, then   
\be\label{lower bdd of steps, basic} 
L>\frac{1}{10(q-1)}\bigg(\frac{1}{M_{0}^{q-1}\delta_{1}}-9q\bigg).\ee
\end{lemma}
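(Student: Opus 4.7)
The plan is to recast the construction in terms of the rescaled sequence $a_k := M_k^{q-1}\delta_1$ and run a telescoping argument on $\{1/a_k\}$. Combining the defining identity $(\lambda_k-1)/\lambda_k^q=a_{k-1}$ with $a_k=\lambda_k^{q-1}a_{k-1}$ gives the clean formula $a_k = 1 - 1/\lambda_k$ and hence the backwards recursion
\[
a_{k-1} = a_k(1-a_k)^{q-1}.
\]
Since $\lambda_k\in(1,q/(q-1)]$, each $a_k$ lies in $(0,1/q]$ and the sequence is strictly increasing; finiteness of $L$ follows because the only fixed point of $x\mapsto x(1-x)^{q-1}$ on $[0,1/q]$ is $0$, so a monotone bounded infinite extension would converge to $0$, contradicting $a_k\ge a_0>0$.

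The quantitative engine is the per-step inequality
\[
\frac{1}{a_{k-1}}-\frac{1}{a_k} = \frac{(1-a_k)^{-(q-1)}-1}{a_k} \le \frac{q-1}{(1-a_k)^q},
\]
obtained by bounding the integrand $(q-1)/(1-s)^q$ on $[0,a_k]$ by its value at the right endpoint. I would then split into two regimes. When $a_k\le 1/(3q)$, Bernoulli's inequality gives $(1-a_k)^q\ge 1-qa_k\ge 2/3$, so $1/a_{k-1}-1/a_k \le 3(q-1)/2$. For the coarser range $a_k\le 1/q$ the same argument yields $1/a_{k-1}-1/a_k < q^q/(q-1)^{q-1} = 1/E_q$, which by (\ref{est on E_q}) is bounded by $3q$.

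For the conclusion I may assume $1/a_0>9q$, since otherwise the right-hand side of (\ref{lower bdd of steps, basic}) is nonpositive. Let $K$ be the largest index with $a_K\le 1/(3q)$; because $a_L>E_q>1/(3q)$, necessarily $K\le L-1$, so $a_{K+1}$ exists and exceeds $1/(3q)$. Telescoping the sharp estimate over $k=1,\dots,K$ gives $1/a_0 - 1/a_K \le \tfrac{3(q-1)}{2}K$, while applying the coarse estimate to the single ``exit'' step $K+1$ together with $1/a_{K+1}<3q$ pins $1/a_K$ into the window $[3q,6q)$. Combining, $K > \tfrac{2}{3(q-1)}(1/a_0-6q)$, and hence $L\ge K+1 > 1+\tfrac{2}{3(q-1)}(1/a_0-6q)$; a short algebraic check (multiply by $30(q-1)$ and invoke $1/a_0>9q$) confirms this dominates $(1/a_0-9q)/(10(q-1))$. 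The delicate point I anticipate is precisely this coupling: the sharp small-$a_k$ estimate alone does not control $1/a_K$ from above, so without the separate exit-step argument one cannot convert the telescoping sum into the advertised polynomial lower bound for $L$.
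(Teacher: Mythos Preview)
Your proof is correct and follows the same strategy as the paper: rescale to $a_k=M_k^{q-1}\delta_1$, derive the backward recursion $a_{k-1}=a_k(1-a_k)^{q-1}$, and telescope $1/a_k$ with a threshold-exit argument to bound $L$. The details differ only in the intermediate inequalities (you exploit the clean identity $a_k=1-1/\lambda_k$ and an integral mean-value bound, use the threshold $1/(3q)$ in place of the paper's $\min\{1/2,E_q\}$, and obtain the sharper per-step constant $\tfrac{3}{2}(q-1)$ versus the paper's $10(q-1)$), but the architecture is identical.
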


\begin{proof} 
First, we will show the construction has to stop in finite steps. Plugging $\lam_{k}=M_{k}/M_{k-1}$ into (\ref{choice of step length}) and rearranging, we obtain 
\be\label{def of M_(k), lemma}
M_{k}= M_{k-1}+M_{k}^{q}\delta_{1}.\ee
Then it follows from (\ref{def of M_(k), lemma}) that the sequence $\{M_{k}\}$ is strictly increasing. 
Therefore, 
\[M_{k}\geq M_{k-1}+M_{k-1}M_{0}^{q-1}\delta_{1}=\big(1+M_{0}^{q-1}\delta_{1}\big)M_{k-1}.\]
Consequently,
\[M_{k}\geq \big(1+M_{0}^{q-1}\delta_{1}\big)^{k}M_{0}.\]
Hence, $M_{k}^{q-1}\delta_{1}$ will exceed $E_{q}$ when $k$ is sufficiently large. So the construction will stop in finite steps. 

Next suppose the constructed sequence is $\{M_{k}\}_{0\leq k\leq L}$. We will derive the lower bound (\ref{lower bdd of steps, basic}) for $L$ as below. If \[M_{0}^{q-1}\delta_{1}>\frac{1}{9q},\]
then (\ref{lower bdd of steps, basic}) holds automatically since the right hand side of (\ref{lower bdd of steps, basic}) is negative. So we will just assume \[M_{0}^{q-1}\delta_{1}\leq \frac{1}{9q}.\] 
Taking advantage of (\ref{est on E_q}), we know
\[M_{0}^{q-1}\delta_{1}\leq \min\{1/2, E_{q}\}.\]
In addition, since the last term of the constructed sequence is $M_{L}$, then $M_{L}^{q-1} \delta_{1}> E_{q}$. As a result, $L\geq 1$ and there exists an index $L_{0}\in [1,L]$ such that
\be\label{L_0}
M_{L_{0}-1}^{q-1}\delta_{1}\leq \min\{1/2, E_{q}\}\quad\text{and}\quad M_{L_{0}}^{q-1}\delta_{1}>\min\{1/2, E_{q}\}.\ee
The reason of considering $\min\{1/2, E_{q}\}$ here instead of $E_{q}$ is because later we need the upper bound $1/2$ to justify (\ref{quadratic ineq for x_k}). 

According to (\ref{def of M_(k), lemma}), 
\[M_{k-1}=M_{k}-M_{k}^{q} \delta_{1}=M_{k}\big(1-M_{k}^{q-1} \delta_{1}\big).\] 
Raising both sides to the power $q-1$ and multiplying by $\delta_{1}$,
\[M_{k-1}^{q-1}\delta_{1}=M_{k}^{q-1}\big(1-M_{k}^{q-1} \delta_{1}\big)^{q-1}\delta_{1}.\]
Define $x_{k}=M_{k}^{q-1}\delta_{1}$. Then 
\be\label{iteration for x_k}
x_{k-1}=x_{k}\,(1-x_{k})^{q-1}, \quad\forall\, 1\leq k\leq L_{0}.\ee
Moreover, $$x_0=M_{0}^{q-1}\delta_{1},\quad x_{L_{0}-1}\leq \min\{1/2, E_{q}\} \quad\text{and}\quad x_{L_{0}}>\min\{1/2, E_{q}\}.$$
Noticing that $M_{L_{0}}=\lam_{L_{0}}M_{L_{0}-1}\leq \frac{q}{q-1}M_{L_{0}-1}$, so
\[x_{L_{0}}=\bigg(\frac{M_{L_{0}}}{M_{L_{0}-1}}\bigg)^{q-1}x_{L_{0}-1}\leq \Big(\frac{q}{q-1}\Big)^{q-1}E_{q}=\frac{1}{q}.\]

Now we claim the following inequality:
\begin{align}\label{ineq between last and first for x_k}
\frac{1}{x_{0}} \leq \frac{1}{x_{L_{0}-1}}+10(q-1)(L_{0}-1).
\end{align}
In fact, if $L_{0}=1$, then (\ref{ineq between last and first for x_k}) automatically holds. If $L_{0}\geq 2$, then for any $1\leq k\leq L_{0}-1$, 
\be\label{quadratic ineq for x_k}
x_{k-1}=x_{k}(1-x_{k})^{q-1}\geq x_{k}\big(1-2(q-1)x_{k}\big)\ee
since $0<x_{k}\leq x_{L_{0}-1}\leq 1/2$. Recalling the fact $x_{k}\leq x_{L_{0}-1}\leq E_{q}$ and the estimate $E_{q}<\frac{1}{(q-1)e}$ in (\ref{est on E_q}), we have
\[1-2(q-1)x_{k}\geq 1-2(q-1)E_{q}\geq \frac{1}{5}.\]
Hence, taking the reciprocal in (\ref{quadratic ineq for x_k}) yields 
\begin{align}
\frac{1}{x_{k-1}} &\leq \frac{1}{x_{k}\big[1-2(q-1)x_{k}\big]} \notag\\
& =\frac{1}{x_{k}}+\frac{2(q-1)}{1-2(q-1)x_{k}} \notag\\
&\leq \frac{1}{x_{k}}+10(q-1). \label{iteration ineq}
\end{align}
Summing up (\ref{iteration ineq}) for $k$ from $1$ to $L_{0}-1$, we also obtain (\ref{ineq between last and first for x_k}).  

Finally, since
\[\frac{1}{3q}<\min\Big\{\frac{1}{2}, E_{q}\Big\}\leq x_{L_{0}}\leq \frac{1}{q},\]
then 
\begin{align*}
x_{L_{0}-1} &= x_{L_{0}}(1-x_{L_{0}})^{q-1}\\
& >\frac{1}{3q}\Big(1-\frac{1}{q}\Big)^{q-1}=\frac{E_{q}}{3}.
\end{align*}
Plugging the above inequality and $x_{0}=M_{0}^{q-1}\delta_{1}$ into (\ref{ineq between last and first for x_k}), it gives
\begin{align*}
\frac{1}{M_{0}^{q-1}\delta_{1}} &< \frac{3}{E_{q}}+10(q-1)(L_{0}-1)\\
&< 9q+10(q-1)(L_{0}-1).
\end{align*}
Rearranging this inequality yields
\[L_{0}>\frac{1}{10(q-1)}\bigg(\frac{1}{M_{0}^{q-1}\delta_{1}}-9q\bigg)+1.\]
Hence,
\[L\geq L_{0}>\frac{1}{10(q-1)}\bigg(\frac{1}{M_{0}^{q-1}\delta_{1}}-9q\bigg).\]

\end{proof}

\begin{lemma}\label{Lemma, simple ineq}
Fix any two constants $A>0$ and $\beta\in(0,1)$. Define $f:[0,1]\rightarrow \m{R}$ by $f(t)=A\,t^{\beta}-t$. Then $f$ attains the maximum at $t=(\min\{1, \beta A\})^{1/(1-\beta)}$ and 
\be\label{min of auxillary fcn}\max_{0\leq t\leq 1}f(t) \geq (1-\beta)A\,(\min\{1, \beta A\})^{\beta/(1-\beta)}.\ee
\end{lemma}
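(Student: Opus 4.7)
The plan is to treat this as a straightforward one-variable calculus exercise on the closed interval $[0,1]$, splitting into two regimes depending on whether the interior critical point of $f$ falls inside $[0,1]$. First I would compute $f'(t)=A\beta t^{\beta-1}-1$, which, since $\beta-1<0$, is strictly decreasing on $(0,\infty)$ from $+\infty$ down to $A\beta-1$. Hence $f$ has a unique critical point $t^{*}=(\beta A)^{1/(1-\beta)}$ at which $f$ transitions from strictly increasing to strictly decreasing, so the maximizer of $f$ on $[0,1]$ is $\min\{t^{*},1\}=(\min\{1,\beta A\})^{1/(1-\beta)}$, matching the formula claimed in the lemma.

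Next I would dispose of the lower bound (\ref{min of auxillary fcn}) by splitting on the sign of $\beta A-1$. In the regime $\beta A\le 1$, the maximizer is the interior critical point $t^{*}$, and direct substitution gives
\[f(t^{*})=A(\beta A)^{\beta/(1-\beta)}-(\beta A)^{1/(1-\beta)}=(1-\beta)A(\beta A)^{\beta/(1-\beta)},\]
which coincides exactly with the right-hand side of (\ref{min of auxillary fcn}) since $\min\{1,\beta A\}=\beta A$ here. In the regime $\beta A>1$, the derivative $f'$ is positive on $(0,1]$ so the maximizer is the endpoint $t=1$, and the claimed bound reduces to $f(1)=A-1\ge(1-\beta)A$, which is equivalent to the case hypothesis $\beta A\ge 1$. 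Since $\min\{1,\beta A\}=1$ in this regime, the right-hand side of (\ref{min of auxillary fcn}) is $(1-\beta)A$, matching the reduction.

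There is no real obstacle in this lemma; the only point requiring any care is bookkeeping the piecewise expression $(\min\{1,\beta A\})^{1/(1-\beta)}$ so that it unifies the two regimes both for the location of the maximizer and for the lower bound, which I would verify by explicitly substituting the two endpoints of the case split.
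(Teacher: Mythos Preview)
Your proof is correct and follows essentially the same route as the paper's: compute $f'(t)=\beta A t^{\beta-1}-1$, identify the unique critical point $t^{*}=(\beta A)^{1/(1-\beta)}$, conclude that the maximizer on $[0,1]$ is $\min\{t^{*},1\}$, and then split into the cases $\beta A\le 1$ and $\beta A>1$ to verify the lower bound exactly as the paper does. One small slip in wording: on $(0,\infty)$ the derivative $f'$ decreases to $-1$, not to $A\beta-1$; the value $A\beta-1$ is $f'(1)$, which is in fact all you use.
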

\begin{proof}
For any $t\in(0,1]$, $f'(t)=\beta A\,t^{\beta-1}-1$, so 
$f$ is increasing when $0\leq t\leq \min\{1, (\beta A)^{1/(1-\beta)}\}$ and decreasing when $\min\{1, (\beta A)^{1/(1-\beta)}\}\leq t\leq 1$. Hence, $f$ attains its maximum at $t=\min\{1, (\beta A)^{1/(1-\beta)}\}$. In addition,
\begin{itemize}
\item If $\beta A\geq 1$, then 
\[\max_{0\leq t\leq 1}f(t)=f(1)=A-1\geq (1-\beta)\,A.\]

\item If $0<\beta A< 1$, then 
\begin{align*}
\max_{0\leq t\leq 1}f(t) &=f\big[(\beta A)^{1/(1-\beta)}\big]\\
&=A\,(\beta A)^{\beta/(1-\beta)}-(\beta A)^{1/(1-\beta)}\\
&=(1-\beta)A\,(\beta A)^{\beta/(1-\beta)}.
\end{align*}
\end{itemize}
Combining these two cases, (\ref{min of auxillary fcn}) is justified.
\end{proof}

\appendix
\section{Time-shifted representation formula}
\label{Sec, adapted rep formula}
In Corollary 3.9 of \cite{YZ16}, it derived the following representation formula (\ref{rep formula}) for the solution $u$ to (\ref{Prob}) (Note $(D\Phi)(x-y,t-\tau)$ in that Corollary is equal to $-D_{y}[\Phi(x-y,t-\tau)]$ in (\ref{rep formula})). Namely, if $T^{*}$ denotes the maximal existence time of $u$, then for any boundary point $(x,t)\in\p\O\times[0,T^{*})$, 
\begin{align}
u(x,t) = &\,\, 2\int_{\O}\Phi(x-y,t)\,u_{0}(y)\,dy- 
2\int_{0}^{t}\int_{\p\O}D_{y}\big[\Phi(x-y,t-\tau)\big]\cdot \ora{n}(y)\,u(y,\tau)\,dS(y)\,d\tau \notag\\
&+2\int_{0}^{t}\int_{\Gamma_1}\Phi(x-y,t-\tau)\,u^{q}(y,\tau)\,dS(y)\,d\tau.  \label{rep formula}
\end{align} 
We want to remark that there is also representation formula for the inside point $(x,t)\in\O\times[0,T^{*})$ (see Theorem 3.8 in \cite{YZ16}), but that formula is different from (\ref{rep formula}) in that there does not have the coefficients 2 in front of the integrals on the right hand side. The appearance of the coefficient 2 in (\ref{rep formula}) is due to the jump relation of the single-layer heat potential when $x\in\p\O$ (see e.g. \cite{Fri64}, Sec. 2, Chap. 5). 

The formula in (\ref{rep formula}) based on the initial data $u_{0}(\cdot)$. Now for any $T\in(0,T^{*})$, we are asking that if regarding $T$ to be the initial time and $u(\cdot,T)$ to be the initial data, then do we still have the representation formula? It seems trivial by just shifting the time $T$, but we should be careful since Corollary 3.9 in \cite{YZ16} deals with $C^{1}(\ol{\O})$ initial data $u_{0}$ but $u(\cdot,T)$ is only in $C^{2}(\O)\cap C(\ol{\O})$. The next lemma claims that as long as $u$ is the solution to (\ref{Prob}) with the assumption (\ref{assumption on prob}), then for any $T\in[0,T^{*})$, there also holds a representation formula similar to (\ref{rep formula}) but with initial data $u(\cdot,T)$.

\begin{lemma}
Assume (\ref{assumption on prob}). Let $T^{*}$ be the maximal existence time and $u$ be the maximal solution to (\ref{Prob}). Then for any $x\in\p\O$, $T\in [0,T^{*})$ and $t\in [0,T^{*}-T)$,
\begin{align}
u(x,T+t) = &\,\, 2\int_{\O}\Phi(x-y,t)\,u(y,T)\,dy-2\int_{0}^{t}\int_{\p\O}D_{y}\big[\Phi(x-y,t-\tau)\big]\cdot \ora{n}(y)\,u(y,T+\tau)\,dS(y)\,d\tau \notag\\
&+2\int_{0}^{t}\int_{\Gamma_1}\Phi(x-y,t-\tau)\,u^{q}(y,T+\tau)\,dS(y)\,d\tau.  \label{rep formula from any time}
\end{align}
\end{lemma}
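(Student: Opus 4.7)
The plan is to deduce (\ref{rep formula from any time}) from the interior version of the representation formula (Theorem 3.8 of \cite{YZ16}, which has the same form as (\ref{rep formula}) but with coefficient $1$) by collapsing the portion of the time integral over $[0,T]$ into a single heat potential against $u(\cdot,T)$, and then passing to $\p\O$ via the standard heat-potential jump relation already used in Corollary 3.9 of \cite{YZ16}. Fix $x\in\O$, $T\in[0,T^{*})$, $t\in[0,T^{*}-T)$ and apply the interior formula to $u$ at $(x,T+t)\in\O\times[0,T^{*})$. Splitting each time integral as $\int_{0}^{T+t}=\int_{0}^{T}+\int_{T}^{T+t}$ and substituting $\tau\mapsto T+\tau$ in the second piece reproduces, with coefficient $1$, the three terms on the right of (\ref{rep formula from any time}), plus a residual built only from data on $[0,T]$; the interior version of (\ref{rep formula from any time}) therefore reduces to proving the auxiliary identity
\begin{equation*}
\int_{\O}\Phi(x-y,t)\,u(y,T)\,dy=\int_{\O}\Phi(x-y,T+t)\,u_{0}(y)\,dy+\int_{0}^{T}\int_{\Gamma_{1}}\Phi(x-y,T+t-\tau)\,u^{q}(y,\tau)\,dS(y)\,d\tau-\int_{0}^{T}\int_{\p\O}u(y,\tau)\,D_{y}[\Phi(x-y,T+t-\tau)]\cdot\ora{n}(y)\,dS(y)\,d\tau.
\end{equation*}

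To establish this auxiliary identity I apply Green's second identity on $\O\times(0,T)$ to $u(y,\tau)$ and the backward heat kernel $v(y,\tau)\triangleq\Phi(x-y,T+t-\tau)$. Since $T+t-\tau\geq t>0$ throughout $[0,T]$, the kernel $v$ is smooth and bounded in $(y,\tau)$, satisfies $v_{\tau}+\Delta_{y}v=0$, and computing $\int_{0}^{T}\int_{\O}\p_{\tau}(uv)\,dy\,d\tau$ in two ways while using $u_{\tau}=\Delta_{y}u$ together with the boundary conditions $\p u/\p n=u^{q}\mathbf{1}_{\Gamma_{1}}$ on $\p\O\setminus\wt{\Gamma}$ yields exactly the displayed identity. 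To upgrade the resulting interior formula to $x\in\p\O$ and convert the coefficient $1$ into $2$, I let $x\to x^{*}\in\p\O$. The bulk term $\int_{\O}\Phi(x-y,t)\,u(y,T)\,dy$ and the single-layer heat potential with density $u^{q}\mathbf{1}_{\Gamma_{1}}$ are continuous across $\p\O$, while the double-layer heat potential with density $u(\cdot,T+\cdot)$ picks up the standard $\tfrac{1}{2}$-jump (Sec.\ 2, Chap.\ 5 of \cite{Fri64}); moving this jump to the left-hand side doubles every coefficient on the right, which is exactly (\ref{rep formula from any time}).

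The main technical obstacle is justifying Green's identity under the limited regularity of $u$: one only knows $u\in C^{2,1}(\O\times(0,T^{*}])\cap C(\ol{\O}\times[0,T^{*}))$, and $\p u/\p n$ jumps across $\wt{\Gamma}$. I handle this by a two-parameter approximation: apply the identity first on $\O_{\delta}\times(\eps,T)$ with $\O_{\delta}\triangleq\{y\in\O:\mathrm{dist}(y,\p\O)>\delta\}$, where every function in play is classical, and then send $\delta,\eps\to 0^{+}$. Because $\wt{\Gamma}$ is $(N-2)$-dimensional and thus $\mathcal H^{N-1}$-null in $\p\O$, the jump of $\p u/\p n$ contributes nothing to the surface integrals; uniform continuity of $u$ on $\ol{\O}\times[0,T^{*})$ together with the smoothness and boundedness of $v$ on $\ol{\O}\times[0,T]$ (thanks to $T+t-\tau\geq t>0$) and the $C^{2}$-regularity of $\p\O$ allow the passage to the limit by dominated convergence, completing the justification.
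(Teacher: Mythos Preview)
Your route is genuinely different from the paper's. You derive the shifted formula from the unshifted interior formula (Theorem~3.8 of \cite{YZ16}) by splitting $\int_{0}^{T+t}=\int_{0}^{T}+\int_{T}^{T+t}$ and collapsing the first piece through a Green's-identity computation on $\O\times(0,T)$ against the smooth backward kernel $v(y,\tau)=\Phi(x-y,T+t-\tau)$, then invoking the double-layer jump. The paper instead mollifies $u(\cdot,T)$ to $g_{j}\in C^{1}(\ol\O)$, solves the \emph{linear} Neumann problem with initial data $g_{j}$ and prescribed flux $u^{q}(\cdot,T+\cdot)$, applies the already-established representation formula to the approximants $v_{j}$, and passes to the limit $j\to\infty$ via the maximum principle (which gives $\|v_{j}-v\|_{\infty}\leq 1/j$). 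No Green's identity is redone and no gradient of $u$ near the boundary is ever touched.

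There is a real gap in your limiting step $\delta\to 0^{+}$. The boundary term
\[
\int_{\eps}^{T}\!\int_{\p\O_{\delta}}\frac{\p u}{\p n_{\delta}}(y,\tau)\,v(y,\tau)\,dS(y)\,d\tau
\]
must converge to $\int_{\eps}^{T}\int_{\Gamma_{1}}u^{q}v\,dS\,d\tau$, and for that you need control of $\nabla u$ on the approximating surfaces $\p\O_{\delta}$: at minimum an integrable majorant so that dominated convergence applies. The stated regularity $u\in C^{2,1}\big(\O\times(0,T^{*}]\big)\cap C\big(\ol\O\times[0,T^{*}]\big)$ gives no such bound, and the ``uniform continuity of $u$'' you invoke constrains $u$, not $\nabla u$. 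Since the Neumann data jumps across $\wt\Gamma$, $\nabla u$ cannot even extend continuously to all of $\ol\O$, so something nontrivial is genuinely required here and you have not supplied it. This is exactly the difficulty the paper singles out (``$u(\cdot,T)$ is only in $C^{2}(\O)\cap C(\ol\O)$'') and is designed to avoid: by applying the representation formula only to the smooth approximants $v_{j}$ and passing to the limit purely in $C^{0}$, the paper never needs any information about $\nabla u$ up to the boundary.
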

\begin{proof}
When $T=0$, (\ref{rep formula from any time}) is just the representation formula (\ref{rep formula}) which has been proven in \cite{YZ16}. So we can assume $T>0$. Define $v:\ol{\O}\times[0,T^{*}-T)\rightarrow \m{R}$ by 
\be\label{def of v}
v(x,t)=u(x,T+t).\ee 
Then $v\in C^{2,1}\big(\O\times(0,T^{*}-T)\big)\bigcap C\big(\ol{\O}\times[0,T^{*}-T)\big)$ and satisfies 
\be\label{Prob, from any time}
\left\{\begin{array}{lll}
v_{t}(x,t)=\Delta v(x,t) &\text{in}& \Omega\times (0,T^{*}-T),\\
\frac{\partial v}{\partial n}(x,t)=u^{q}(x,T+t) &\text{on}& \Gamma_1\times (0,T^{*}-T),\\
\frac{\partial v}{\partial n}(x,t)=0 &\text{on}& \Gamma_2\times (0,T^{*}-T),\\
v(x,0)=u(x,T) &\text{in}& \Omega.
\end{array}\right.\ee

Since $u(\cdot,T)$ is continuous on $\ol{\O}$ as a function in space variable $x$, we can continuously extend $u(\cdot,T)$ to $\m{R}^{N}$ and still denote it to be $u(\cdot, T)$. Let $\eta\in C_{0}^{\infty}(\m{R}^{N})$ be the standard mollifier. That is, 
\[\eta(x)=\left\{\begin{array}{lll}
C\exp\big(\frac{1}{|x|^{2}-1}\big) & \text{if} & |x|<1,\\
0 & \text{if} & |x|\geq 1,
\end{array}\right.\] 
where the positive constant $C$ is selected so that $\int_{\m{R}^{N}}\eta(x)\,dx=1$. Then for any $j\geq 1$, there exists $\eps_{j}>0$ such that by defining 
\[\eta_{j}(x)=\eps_{j}^{-N}\eta(x/\eps_{j}) \quad \text{and}\quad g_{j}(x)=\big(\eta_{j}(\cdot)*u(\cdot,T)\big)(x),\] then
\be\label{approx to initial data}
\max_{x\in\ol{\O}}|g_j(x)-u(x,T)|\leq \frac{1}{j}. \ee
Since $g_{j}\in C^{1}(\ol{\O})$, it follows from Theorem B.4 in \cite{YZ16} that there exists $v_{j}\in C^{2,1}\big(\O\times(0,T^{*}-T)\big)\bigcap C\big(\ol{\O}\times[0,T^{*}-T)\big)$ such that 
\be\label{Prob, approx from any time}
\left\{\begin{array}{lll}
(v_{j})_{t}(x,t)=\Delta v_{j}(x,t) &\text{in}& \Omega\times (0,T^{*}-T), \vspace{0.02in}\\ 
\frac{\partial v_{j}}{\partial n}(x,t)=u^{q}(x,T+t) &\text{on}& \Gamma_1\times (0,T^{*}-T), \vspace{0.02in}\\
\frac{\partial v_{j}}{\partial n}(x,t)=0 &\text{on}& \Gamma_2\times (0,T^{*}-T), \vspace{0.02in}\\
v_{j}(x,0)=g_{j}(x) &\text{in}& \Omega.
\end{array}\right.\ee
Again due to the fact that $g_{j}\in C^{1}(\ol{\O})$, we can apply the representation formula (\ref{rep formula}) to $v_{j}$ so that for any $(x,t)\in\p\O\times[0,T^{*}-T)$,
\begin{align}\label{rep formula for approx soln}
v_{j}(x,t) = &\,\, 2\int_{\O}\Phi(x-y,t)\,g_{j}(y)\,dy- 
2\int_{0}^{t}\int_{\p\O}D_{y}\big[\Phi(x-y,t-\tau)\big]\cdot \ora{n}(y)\,v_{j}(y,\tau)\,dS(y)\,d\tau \notag\\
&+2\int_{0}^{t}\int_{\Gamma_1}\Phi(x-y,t-\tau)\,u^{q}(y,T+\tau)\,dS(y)\,d\tau.  
\end{align} 

Define $w_{j}=v_{j}-v$. Then $w_{j}\in C^{2,1}\big(\O\times(0,T^{*}-T)\big)\bigcap C\big(\ol{\O}\times[0,T^{*}-T)\big)$ and satisfies
\bes \left\{\begin{array}{lll}
(w_{j})_{t}(x,t)=\Delta w_{j}(x,t) &\text{in}& \Omega\times (0,T^{*}-T), \vspace{0.02in}\\
\frac{\partial w_{j}}{\partial n}(x,t)=0 &\text{on}& \Gamma_1\times (0,T^{*}-T), \vspace{0.02in}\\
\frac{\partial w_{j}}{\partial n}(x,t)=0 &\text{on}& \Gamma_2\times (0,T^{*}-T), \vspace{0.02in}\\
w_{j}(x,0)=g_{j}(x)-u(x,T) &\text{in}& \Omega.
\end{array}\right.\ees
So it follows from the maximum principle and the Hopf lemma that for any $(x,t)\in\ol{\O}\times[0,T^{*}-T)$,
$$|w_{j}(x,t)|\leq \max_{x\in\ol{\O}}|g_j(x)-u(x,T)|\leq \frac{1}{j}.$$
That is,
\be\label{pt conv for v_j}
|v_{j}(x,t)-v(x,t)|\leq \frac{1}{j}, \quad\forall\,(x,t)\in\ol{\O}\times[0,T^{*}-T).\ee

Now fixing any point $(x,t)\in\p\O\times[0,T^{*}-T)$ and sending $j\rightarrow\infty$ in (\ref{rep formula for approx soln}), then it follows from (\ref{pt conv for v_j}), (\ref{approx to initial data}) and Lebesgue's dominated convergence theorem that 
\begin{align*}
v(x,t) = &\,\, 2\int_{\O}\Phi(x-y,t)\,u(y,T)\,dy- 
2\int_{0}^{t}\int_{\p\O}D_{y}\big[\Phi(x-y,t-\tau)\big]\cdot \ora{n}(y)\,v(y,\tau)\,dS(y)\,d\tau \notag\\
&+2\int_{0}^{t}\int_{\Gamma_1}\Phi(x-y,t-\tau)\,u^{q}(y,T+\tau)\,dS(y)\,d\tau.  
\end{align*} 
Finally, (\ref{rep formula from any time}) is verified by recalling the definition (\ref{def of v}) for $v$.
\end{proof}

\bibliographystyle{plain}
\bibliography{References}

\begin{thebibliography}{10}

\bibitem{AD17}
J.~R. Anderson and K.~Deng.
\newblock A lower bound on the blow up time for solutions of a chemotaxis
  system with nonlinear chemotactic sensitivity.
\newblock {\em Nonlinear Anal.}, 159:2--9, 2017.

\bibitem{BS14}
A.~Bao and X.~Song.
\newblock Bounds for the blowup time of the solutions to quasi-linear parabolic
  problems.
\newblock {\em Z. Angew. Math. Phys.}, 65(1):115--123, 2014.

\bibitem{DL00}
K.~Deng and H.~A. Levine.
\newblock The role of critical exponents in blow-up theorems: the sequel.
\newblock {\em J. Math. Anal. Appl.}, 243(1):85--126, 2000.

\bibitem{DS16}
J.~Ding and X.~Shen.
\newblock Blow-up in {$p$}-{L}aplacian heat equations with nonlinear boundary
  conditions.
\newblock {\em Z. Angew. Math. Phys.}, 67(5):Art. 125, 18, 2016.

\bibitem{Ena11}
C.~Enache.
\newblock Blow-up phenomena for a class of quasilinear parabolic problems under
  {R}obin boundary condition.
\newblock {\em Appl. Math. Lett.}, 24(3):288--292, 2011.

\bibitem{Fri64}
A.~Friedman.
\newblock {\em Partial differential equations of parabolic type}.
\newblock Prentice-Hall, Inc., Englewood Cliffs, N.J., 1964.

\bibitem{Fuj66}
H.~Fujita.
\newblock On the blowing up of solutions of the {C}auchy problem for
  {$u_{t}=\Delta u+u^{1+\alpha }$}.
\newblock {\em J. Fac. Sci. Univ. Tokyo Sect. I}, 13:109--124 (1966), 1966.

\bibitem{Hu11}
B.~Hu.
\newblock {\em Blow-up theories for semilinear parabolic equations}, volume
  2018 of {\em Lecture Notes in Mathematics}.
\newblock Springer, Heidelberg, 2011.

\bibitem{HY94}
B.~Hu and H.-M. Yin.
\newblock The profile near blowup time for solution of the heat equation with a
  nonlinear boundary condition.
\newblock {\em Trans. Amer. Math. Soc.}, 346(1):117--135, 1994.

\bibitem{Kap63}
S.~Kaplan.
\newblock On the growth of solutions of quasi-linear parabolic equations.
\newblock {\em Comm. Pure Appl. Math.}, 16:305--330, 1963.

\bibitem{LSU68}
O.~A. Lady\v{z}enskaja, V.~A. Solonnikov, and N.~N. Ural'ceva.
\newblock {\em Linear and quasilinear equations of parabolic type}.
\newblock Translated from the Russian by S. Smith. Translations of Mathematical
  Monographs, Vol. 23. American Mathematical Society, Providence, R.I., 1968.

\bibitem{Lev75}
H.~A. Levine.
\newblock Nonexistence of global weak solutions to some properly and improperly
  posed problems of mathematical physics: the method of unbounded {F}ourier
  coefficients.
\newblock {\em Math. Ann.}, 214:205--220, 1975.

\bibitem{Lev90}
H.~A. Levine.
\newblock The role of critical exponents in blowup theorems.
\newblock {\em SIAM Rev.}, 32(2):262--288, 1990.

\bibitem{LP74}
H.~A. Levine and L.~E. Payne.
\newblock Nonexistence theorems for the heat equation with nonlinear boundary
  conditions and for the porous medium equation backward in time.
\newblock {\em J. Differential Equations}, 16:319--334, 1974.

\bibitem{LL12}
F.~Li and J.~Li.
\newblock Global existence and blow-up phenomena for nonlinear divergence form
  parabolic equations with inhomogeneous {N}eumann boundary conditions.
\newblock {\em J. Math. Anal. Appl.}, 385(2):1005--1014, 2012.

\bibitem{Lie96}
G.~M. Lieberman.
\newblock {\em Second order parabolic differential equations}.
\newblock World Scientific Publishing Co., Inc., River Edge, NJ, 1996.

\bibitem{L-GMW91}
J.~L{\'o}pez-G{\'o}mez, V.~M{\'a}rquez, and N.~Wolanski.
\newblock Blow up results and localization of blow up points for the heat
  equation with a nonlinear boundary condition.
\newblock {\em J. Differential Equations}, 92(2):384--401, 1991.

\bibitem{PP13}
L.~E. Payne and G.~A. Philippin.
\newblock Blow-up phenomena in parabolic problems with time dependent
  coefficients under {D}irichlet boundary conditions.
\newblock {\em Proc. Amer. Math. Soc.}, 141(7):2309--2318, 2013.

\bibitem{PPV-P10}
L.~E. Payne, G.~A. Philippin, and S.~Vernier~Piro.
\newblock Blow-up phenomena for a semilinear heat equation with nonlinear
  boundary condition, {II}.
\newblock {\em Nonlinear Anal.}, 73(4):971--978, 2010.

\bibitem{PS06}
L.~E. Payne and P.~W. Schaefer.
\newblock Lower bounds for blow-up time in parabolic problems under {N}eumann
  conditions.
\newblock {\em Appl. Anal.}, 85(10):1301--1311, 2006.

\bibitem{PS07}
L.~E. Payne and P.~W. Schaefer.
\newblock Lower bounds for blow-up time in parabolic problems under {D}irichlet
  conditions.
\newblock {\em J. Math. Anal. Appl.}, 328(2):1196--1205, 2007.

\bibitem{PS09}
L.~E. Payne and P.~W. Schaefer.
\newblock Bounds for blow-up time for the heat equation under nonlinear
  boundary conditions.
\newblock {\em Proc. Roy. Soc. Edinburgh Sect. A}, 139(6):1289--1296, 2009.

\bibitem{QS07}
P.~Quittner and P.~Souplet.
\newblock {\em Superlinear parabolic problems}.
\newblock Birkh\"auser Advanced Texts: Basler Lehrb\"ucher. [Birkh\"auser
  Advanced Texts: Basel Textbooks]. Birkh\"auser Verlag, Basel, 2007.
\newblock Blow-up, global existence and steady states.

\bibitem{RR97}
D.~F. Rial and J.~D. Rossi.
\newblock Blow-up results and localization of blow-up points in an
  {$N$}-dimensional smooth domain.
\newblock {\em Duke Math. J.}, 88(2):391--405, 1997.

\bibitem{TV-P16}
Y.~Tao and S.~Vernier~Piro.
\newblock Explicit lower bound of blow-up time in a fully parabolic chemotaxis
  system with nonlinear cross-diffusion.
\newblock {\em J. Math. Anal. Appl.}, 436(1):16--28, 2016.

\bibitem{Wal75}
W.~Walter.
\newblock On existence and nonexistence in the large of solutions of parabolic
  differential equations with a nonlinear boundary condition.
\newblock {\em SIAM J. Math. Anal.}, 6:85--90, 1975.

\bibitem{YZ16}
X.~Yang and Z.~Zhou.
\newblock Blow-up problems for the heat equation with a local nonlinear
  {N}eumann boundary condition.
\newblock {\em J. Differential Equations}, 261(5):2738--2783, 2016.

\bibitem{YZ1707}
X.~Yang and Z.~Zhou.
\newblock Improvements on lower bounds for the blow-up time under local
  nonlinear neumann conditions.
\newblock Submitted, arXiv:1707.01641.

\end{thebibliography}

\end{document}